\documentclass[12pt,a4paper]{amsart}
\usepackage{mathrsfs}
\usepackage{amsfonts}
\usepackage{txfonts}

\usepackage{hyperref}
\usepackage{latexsym}
\usepackage{amssymb}

\newtheorem{theorem}{Theorem}[section]
\newtheorem{lemma}[theorem]{Lemma}
\newtheorem{corollary}[theorem]{Corollary}
\newtheorem{proposition}[theorem]{Proposition}

\newtheorem{problem}[theorem]{Problem}

\theoremstyle{definition}
\newtheorem{definition}[theorem]{Definition}

\newtheorem{remark}[theorem]{Remark}

\newcommand{\R}{\mathbb{R}}
\newcommand{\N}{\mathbb{N}}
\newcommand{\eps}{\varepsilon}

\numberwithin{equation}{section}

\begin{document}

\title[countable determination of the Kuratowski measure ]
{{\bf On countable determination of the Kuratowski measure of noncompactness}}

\author{Xiaoling Chen, Lixin Cheng$^\sharp$ }

\address{Xiaoling Chen: School of Mathematical Sciences, Xiamen University,
 Xiamen, 361005, China}
\email{30128299@qq.com\;\;(X. Chen)}
\address{ Lixin Cheng:  School of Mathematical Sciences, Xiamen University,
 Xiamen, 361005, China}
 \email{lxcheng@xmu.edu.cn\;\;(L. Cheng)}

 \thanks{$^\sharp$ The corresponding author; support
by NSFC, grant 11731010}


\begin{abstract}  A long-standing question in the theory of measures of noncompactness is that for the Kuratowski measure of noncompactness $\alpha$ defined on a metric space $M$, and for every bounded subset $B\subset M$, is there a countable\;subset $B_0\subset B$ such that $\alpha(B_0)=\alpha(B)$?
In this paper, we give an affirmative answer to the question above. It is done by showing that for each nonempty set $B$ of a Banach space,
there is a countable subset $B_0\subset B$ so that $B$ is strongly finitely representable in $B_0$, and that there is a free ultrafilter $\mathcal U$ so that $B$ is affinely isometric to
a subset of the ultrapower $[{\rm co}(B_0)]_\mathcal U$ of ${\rm co}(B_0)$.
\end{abstract}

\keywords{Kuratowski's measure of noncompactness; countable determination; strongly finite representability; Banach space; metric space}

\subjclass[2010]{Primary  47H08, 46B07, 46M07, 46B04, 46B26}

\maketitle

\section{Introduction}
The study of measures of noncompactness  has continued for over 80 years. It has been shown that the theory of measures of noncompactness  was  used  in  a wide variety of topics in nonlinear analysis. 
Roughly speaking, a measure of noncompactness $\mu$ is a   nonnegative function defined on the family $\mathscr B(M)$ consisting of all bounded subset of a  metric space, in particular, a Banach space $M$ and satisfies some specific properties such as sublinearity,  non-decreasing monotonicity in the order of  the set inclusion, and the (most important) noncompactness that $\mu(B)=0$ if and only if $B$ is relatively compact in $M$.

The first measure of noncompactness $\alpha$ was introduced and studied by K. Kuratowski \cite{ku} in 1930, which is now called the Kuratowski measure of noncompactness:
\begin{equation}\alpha(B)=\inf\{\ d>0: B\subset\cup_{j\in F}E_j, \;F^\sharp<\infty,\;d(E_j)\leq d\ \},\;B\in\mathscr B(M),\end{equation}
where $F^\sharp$ is the cardinality of $F\subset\N$, and $d(E_j)$ denotes  the diameter of $E_j\subset M$. This measure has been widely used.
The earliest successful application of the Kuratowski measure  was applied in the fixed point theory.
In 1955, G. Darbo \cite{da}  extended  the Schauder fixed point theorem to noncompact operators  named set-contractive operators. 
Since then, the study of measures of noncompactness and of their applications has become an active research area, and various measures of noncompactness have appeared. Among many other measures, the Hausdorff measure of noncompactness (or, the ball measure of noncompactness) $\beta$ is another commonly used measure (introduced by I. Gohberg, L.S. Go{\l}den\'{s}shte\u{\i}n and A.S.Markus \cite{go} in 1957). It is defined for $B\in\mathscr B(M)$ by
\begin{equation}\beta(B)=\inf\{r>0: B\subset\cup_{x\in F}B(x,r)\;{\rm for\;some\;finite\;}F\subset M\},\end{equation}
where $B(x,r)$ denotes the closed ball centered at $x$ with radius $r$.

It is easy to observe that if $\mu$ is either the Kuratowski measure $\alpha$ or the Hausdorff measure $\beta$, then it satisfies the following three conditions.

(1) $B\in\mathscr B(M),\;\mu(B)=0 \Longleftrightarrow B$ is relatively compact;

(2) $A, B\in\mathscr B(M)\; {\rm with\;}A\supset B\;\Longrightarrow \mu(A)\geq\mu(B);$

(3) $A, B\in\mathscr B(M) \Longrightarrow\;\mu(A\cup B)=\mu(A)\vee\mu(B),$ and


\noindent
If, in addition, $M$ is a Banach space,  then

(4) $B\in\mathscr B(M)\Longrightarrow \mu({\rm co}{(B)})= \mu(B)$;

(5) $B\in\mathscr B(M)\Longrightarrow \mu((kB))= |k|\mu(B)$, \;$\forall\;{\rm scalar}\; k;$

(6) $A, B\in\mathscr B(M) \Longrightarrow\;\mu(A+ B)\leq\mu(A)+\mu(B).$

A measure $\mu$ of noncompactness defined on a Banach space $M$ satisfying the six properties above is called a regular measure (see, \cite{ba, ba2}).
It is easy to observe that both the Kuratowski measure $\alpha$ and the Hausdorff measure $\beta$ are regular measures. \\

In the theory of measures of noncompactness, the following  ``countable determination question" is a long-standing problem (see, for example, \cite[Subsection 1.4.3, p.19]{ak}).
 \begin{problem} Given a measure of noncompactness $\mu$ on a complete metric space $M$, is the following statement true?
\begin{equation}
\forall B\in\mathscr B(M),\;\exists \;{\rm countable\;subset\;} B_0\subset B\;{\rm so\;that\;}\mu(B_0)=\mu(B).
\end{equation}
\end{problem}
This question is not only natural but also important. For example, if the metric space $M$ in question is a Banach space consisting of measurable functions (say, an $L_p$-space),  and if $B=\{f_n\}\in\mathscr B(M)$ is a countable subset of $M$, then all the functions
$\sup B\equiv\sup_nf_n$, $\inf B\equiv\inf_nf_n$, $\limsup_nf_n$ and $\liminf_nf_n$ are measurable. Besides, it is usually easier to deal with a sequence than to handle an uncountable subset. See, for example, \cite{ag, bo}.
B.N. Sadovskii (\cite{sa, sa2}) first studied the countable determination  question, and gave a negative answer to it for the Hausdorff measure $\beta$. Indeed,  he defined a ``sequential measure" $\tilde{\beta}$ with respect to $\beta$ by
\begin{equation}
\tilde{\beta}(B)=\sup\lbrace\beta(C):C\;{\rm is\;a\; countable\; subset\; of\;} B \rbrace,\;B\in\mathscr B(M),
\end{equation}
 and showed  the following (sharp) inequalities.
 \begin{equation} \frac{1}{2}\beta(B)\leq\tilde{\beta}(B)\leq\beta(B),\; B\in\mathscr B(M).\end{equation}
But he constructed a Banach space satisfying that there is a bounded subset $B$ in it such that $\tilde{\beta}(B)=\frac{1}2<1=\beta(B)$ (see, also, \cite[\S.1.4]{ak}).
However, the following question remains open.
\begin{problem}
For the Kuratowski measure of noncompactness $\alpha$ on a metric $M$, is the following assertion true?
\begin{equation}
\forall B\in\mathscr B(M),\;\exists \;{\rm countable\;subset\;} B_0\subset B\;{\rm so\;that\;}\alpha(B_0)=\alpha(B).
\end{equation}
\end{problem}
In this paper, we will give this question an affirmative answer. \\

This paper is organized as follows. In Section 2, we first recall some definitions related to a localized setting of ``finite representability". Then we introduce a new notion, namely, strongly finite representability by substituting ``polyhedrons" for ``simplexes" in the notion of ``finite representability", and through a lemma,  we show that  the strongly finite representability implies the finite representability, and it is properly stronger than the finite representability by a simple example. In the third section,  we first recall some notions and basic properties related to ultraproducts of sets in Banach spaces. Then we conclude this section
by a lemma (Lemma 3.3) stating that if a subset $A$ of a Banach space is strongly finitely representable in a convex subset $B$ of another Banach space, then there is a free ultrafilter $\mathcal U$ such that $A$ is affinely isometric to a subset of the ultrapower $(B)_\mathcal U$ of $B$. In Section 4, after showing eight  lemmas, we further prove the main result of this section: For every subset $B$ of a Banach space, there is a countable subset $B_0$ of $B$ such that $B$ is strongly finitely representable in $B_0$. In Section 5 (the last section), we finally show  the result stated in the abstract of the paper: For every (bounded) subset $B$ of a metric space, there exists a countable subset $B_0\subset B$ such that the Kuratowski measure of $B_0$ coincides with the Kuratowski measure of $B$.

\section{Localized finite representability in Banach spaces }
In this section, we will assume that $X$ is a Banach space, and $X^*$ its dual. $B_X$ denotes the closed unit ball of $X$. For a subset $A\subset X$, ${\rm co}(A)$ (resp. $\overline{A}$,  ${\rm aff}(A)$) stands for the convex hull (resp. the closure, the affine hull) of $A$.

Recall that a Banach space $X$ is said to be finitely representable in another Banach space $Y$ provided that for all $\eps>0$ and for every finite dimensional subspace $F\subset X$ there exist a (finite dimensional) subspace $G\subset Y$ and a linear isomorphism $T: F\rightarrow G$ so that $\|T\|\cdot\|T^{-1}\|<1+\eps$. The following notion is a generalization of the classical finite representability of Banach spaces to general subsets, which was introduced by Cheng et al. \cite{cheng2} (for convex subsets) and \cite{cheng1} (for general subsets). It is done by substituting ``simplexes" for the ``finite dimensional subspaces".  An $n$-simplex  in a linear space $X$ is a convex set $S$ satisfying that there exist $n+1$ affinely independent vectors $x_j\in X,\;j=0,1,\cdots,n$, i.e. $x_1-x_0, x_2-x_0,\cdots,x_n-x_0$ are linearly independent, such that $S={\rm co}(x_0,x_1,\cdots,x_n)$, the convex hull of $(x_j)_{j=0}^n$.
\begin{definition}
Suppose that $X$ and $Y$ are Banach spaces, and $A\subset X$, $B\subset Y$ are two subsets.

i) Given $\eps>0$, the set  $A$ is said to be $\eps$-finitely representable in the set $B$, if for all  $n\in\N$ and for every $n$-simplex $S_n={\rm co}(x_0,x_1,\cdots,x_n)$ with the vertices $(x_j)_{j=0}^n\subset A$, there exist an $n$-simplex $S_n^\prime={\rm co}(y_0,y_1,\cdots,y_n)$ with vertices $(y_j)_{j=0}^n\subset B$ and an affine isomorphism $T: {\rm aff}(S_n)\rightarrow {\rm aff}(S_n^\prime) $ satisfying that $T(x_j)_{j=0}^n=(y_j)_{j=0}^n$ (or, equivalent, $TS_n=S^\prime_n$) and that
\begin{equation}
(1-\eps)\|x-y\|\leq\|Tx-Ty\|\leq(1+\eps)\|x-y\|,\;\forall\;x,y\in {\rm aff}(S_n),
\end{equation}
where $\|T\|=\sup\{\frac{\|Tx-Ty\|}{\|x-y\|}: x\neq y\in{\rm aff}(S_n)\}.$

ii) If, in addition,  for all $\eps>0$, $A$ is  $\eps$-finitely representable in $B$, then we say that $A$ is finitely representable in  $B$.

iii) In particular, if $A$ itself is finite, and if $A$ is finitely representable in  $B$, then we simply say that $A$ is  representable in  $B$.
\end{definition}

It is easy to observe that $X$ is finitely representable in $Y$ if and only if the unit ball $B_X$ of $X$ is finitely representable in the unit ball $B_Y$ of $Y$.\\

For a Banach space $X$, we use $\mathscr B(X)$ to denote the collection of all nonempty bounded subsets of $X$ endowed with the Hausdorff metric $d_\mathfrak{H}$:
\begin{equation} d_\mathfrak{H}(A,B)\equiv\inf\{r>0: A\subset B+rB_X, B\subset A+rB_X\},\;\;A, B\in\mathscr B(X),\end{equation}
where $B_X$ denotes the closed unit ball of $X$.

\begin{definition} Let $H$ be a convex set in a Banach space $X$. Then

  i) $H$ is said to be a convex polyhedron provided there exist $z_1,z_2,\cdots, z_k\in X$ for some $k\in\N$ such that $H={\rm co}(z_1,z_2,\cdots,z_k);$ In this case, the set ${\rm extr}(H) (\subset(z_1,z_2,\cdots,z_m))$ of all extreme points of $H$ is called the vertex set of $H$, and denoted by ${\rm vert}(H) (={\rm extr}(H))$.

 ii)  A convex polyhedron $H$ is called an $n$-dimensional  convex polyhedron if its affine hull ${\rm aff}(H)$ is an $n$-dimensional affine subspace of $X$, i.e. ${\rm dim}[{\rm aff}(H)]=n$.

 iii)  We say that an $n$-dimensional  convex polyhedron $H$ is an $(n,m)$-polyhedron for some non-negative integer $m$  if $${\rm vert}(H)=(x_0, x_1, \cdots, x_n, x_{n+1},\cdots, x_{n+m}).$$
Since the vertex set ${\rm vert}(H)$ of the $(n,m)$-polyhedron $H$  always contains a maximal affinely independent subset of $n+1$ elements, we write an $(n,m)$-polyhedron $H$ as
 \begin{equation}
 H={\rm co}(x_0,x_1,\cdots,x_n,x_{n+1},\cdots,x_{n+m}),
 \end{equation}
where $(x_0, x_1, \cdots, x_n)$ is a maximal affinely independent subset of ${\rm vert}(H)$.  In this case, we have
\begin{equation}
{\rm aff}(H)={\rm aff}(x_0, x_1, \cdots, x_n).
\end{equation}
\end{definition}
\begin{remark}

Unless stated otherwise, by an $(n,m)$-polyhedron $$H=H(x_0,x_1,\cdots,x_n,x_{n+1},\cdots,x_{n+m}),$$ we always mean that $${\rm vert}(H)=(x_0,x_1,\cdots,x_n,x_{n+1},\cdots,x_{n+m})$$ with $H={\rm co}(x_0,x_1,\cdots,x_{n+m})$, that  $S\equiv{\rm co}(x_0,x_1,\cdots,x_n)$ is an $n$-simplex, and that ${\rm aff}(S)={\rm aff}(H).$
 \end{remark}
 \begin{lemma}
 Suppose that $H_1=H(x_0,x_1,\cdots,x_n,x_{n+1},\cdots,x_{n+m})$ and $H_2=H(y_0,y_1,\cdots,y_n,y_{n+1},\cdots,y_{n+m})$ are two $(n,m)$-polyhedrons in a Banach space $X$.
 Let $$\delta_1=\min\{\|x_i-x_j\|:0\leq i\neq j\leq n+m\}>0,$$  $$\delta_2=\min\{\|y_i-y_j\|:0\leq i\neq j\leq n+m\}>0,$$ and let \begin{equation}0<2\eps<\delta=\min\{\delta_1,\delta_2\}.\end{equation}
 Then
 \begin{equation}d_\mathfrak{H}({\rm vert}(H_1),{\rm vert}(H_2))<\eps\end{equation}
 if and only if there is a permutation $\pi:\{0,1,\cdots,n+m\}\rightarrow \{0,1,\cdots,n+m\}$ such that
 \begin{equation}\|x_{j}-y_{\pi(j)}\|<\eps,\;j=0,1,\cdots,n+m.\end{equation}
 \end{lemma}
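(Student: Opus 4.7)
The plan is to prove the two directions separately, with the ``only if'' being where the hypothesis $2\eps<\delta$ actually does work.

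For the ``if'' direction, assuming a permutation $\pi$ with $\|x_j-y_{\pi(j)}\|<\eps$ exists, I would simply observe that this forces ${\rm vert}(H_1)\subset {\rm vert}(H_2)+\eps B_X$ (since every $x_j$ is within $\eps$ of $y_{\pi(j)}$) and, using $\pi^{-1}$, that ${\rm vert}(H_2)\subset {\rm vert}(H_1)+\eps B_X$. By the definition of the Hausdorff metric, this gives $d_\mathfrak{H}({\rm vert}(H_1),{\rm vert}(H_2))\leq\eps$, and a strict inequality can be obtained by taking a slightly smaller $\eps'$ bounding the finitely many values $\|x_j-y_{\pi(j)}\|$ from above. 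This direction is essentially bookkeeping.

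For the ``only if'' direction, suppose $d_\mathfrak{H}({\rm vert}(H_1),{\rm vert}(H_2))<\eps$. Fix any $\eps'$ with $d_\mathfrak{H}<\eps'<\eps$ and $2\eps'<\delta$. Then for each $j\in\{0,1,\cdots,n+m\}$ there is at least one index $k$ with $\|x_j-y_k\|<\eps'$; choose such a $k$ and call it $\pi(j)$. The key step is to check that $\pi$ is injective, for then it is automatically a bijection of the finite set $\{0,1,\cdots,n+m\}$ and hence a permutation. Injectivity is a clean application of the triangle inequality combined with the gap hypothesis: if $\pi(i)=\pi(j)=k$ for $i\neq j$, then
\begin{equation*}
\|x_i-x_j\|\leq\|x_i-y_k\|+\|y_k-x_j\|<2\eps'<\delta\leq\delta_1,
\end{equation*}
contradicting the very definition of $\delta_1$ as the minimum pairwise distance among distinct vertices of $H_1$. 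So $\pi$ is a permutation, and $\|x_j-y_{\pi(j)}\|<\eps'<\eps$ for every $j$.

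I do not anticipate a genuine obstacle here: the statement is essentially a bookkeeping fact about finite point sets that are pairwise well-separated, and the only subtle point is that the hypothesis $2\eps<\delta$ is what prevents two distinct $x_i,x_j$ from being matched to the same $y_k$. The main thing to be careful about is the strict versus non-strict inequalities; this is why I would introduce an intermediate $\eps'$ between $d_\mathfrak{H}({\rm vert}(H_1),{\rm vert}(H_2))$ and $\eps$, so that all ``$<\eps$'' conclusions actually follow from strict inequalities along the way rather than from taking suprema.
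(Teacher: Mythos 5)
Your proof is correct and follows essentially the same route as the paper's: the ``if'' direction is the same two-inclusion bookkeeping, and the ``only if'' direction builds the permutation from near-neighbours and kills collisions with the triangle inequality and the gap $2\eps<\delta$. The only real difference is the injectivity step, where you bound $\|x_i-x_j\|<2\eps'<\delta_1$ to show two distinct $x$'s cannot share an image (the direct argument), while the paper instead uses $\delta_2$ to show each $x_j$ has a unique near-neighbour in ${\rm vert}(H_2)$; your version is, if anything, the cleaner way to establish injectivity, and your intermediate $\eps'$ correctly handles the strict inequalities.
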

 \begin{proof}
 Sufficiency. Assume that there is a permutation $$\pi:\{0,1,\cdots,n+m\}\rightarrow \{0,1,\cdots,n+m\}$$ such that (2.7) holds.
 Then we obtain \begin{equation}{\rm vert}(H_1)\subset {\rm vert}(H_2)+\eps B_X,\; {\rm and\;}\; {\rm vert}(H_2)\subset {\rm vert}(H_1)+\eps B_X.\end{equation}
 Thus, (2,6) holds.

 Necessity. Assume that (2.6) is true. Since it is equivalent to (2.8), for each $x_j\in{\rm vert}(H_1)$, there is $z_j\in{\rm vert}(H_2)$ such that
 $\|x_j-z_j\|<\eps.$ We denote by $y_{\pi(j)}=z_j$. Then it remains to show that $\pi:{\rm vert}(H_1)\rightarrow {\rm vert}(H_2)$ is a bijection. Since  the cardinalities of both ${\rm vert}(H_1)$ and ${\rm vert}(H_2)$ are $n+m+1$, it suffices to prove that $\pi$ is injective.
 For any fixed $0\leq j\leq n+m$, let $z_{1j},z_{2j}\in{\rm vert}(H_2)$ be two different points such that $\|x_j-z_{1j}\|,\;\|x_j-z_{2j}\|<\eps.$ Then by (2.5) $$2\eps<\delta\leq\|z_{1j}-z_{2j}\|\leq\|x_j-z_{1j}\|+\|x_j-z_{2j}\|<2\eps,$$
 and this is a contradiction.
 \end{proof}

  In the following, we will introduce a notion, namely, strong finite representability.  It is done by substituting $(n,m)$-polyhedrons for $n$-simplexes in Definition 2.1.

\begin{definition}
Suppose that $X$ and $Y$ are Banach spaces, and $A\subset X$, $B\subset Y$ are two subsets.

i) Given $\eps>0$, the set  $A$ is said to be $\eps$-strongly finitely representable in the set $B$, if for every pair $n,m$ of non-negative integers, and for every $(n,m)$-polyhedron $H\subset X$ with ${\rm vert}(H)\subset A$, there exist an $(n,m)$-polyhedron $H^\prime\subset Y$ with ${\rm vert}(H^\prime)\subset B$ and an affine isomorphism $T: {\rm aff}(H)\rightarrow {\rm aff}(H^\prime)$ such that
\begin{equation}
(1-\eps)\|x-y\|\leq\|Tx-Ty\|\leq(1+\eps)\|x-y\|,\;\;\forall\;x,y\in{\rm aff}(H),
\end{equation}
and
\begin{equation}
d_\mathfrak{H}\Big({\rm vert}(T(H)),{\rm vert}(H^\prime)\Big)<\eps,
\end{equation}
where $d_\mathfrak{H}$ is the Hausdorff metric on $\mathscr{B}(Y)$, the set of all nonempty bounded subsets of $Y$.

ii) $A$ is said to be strongly finitely representable in  $B$, if $A$ is  $\eps$-strongly finitely representable in  $B$ for all $\eps>0$.

iii) In particular, if $A$ itself is finite, and if $A$ is strongly finitely representable in  $B$, then we simply say that $A$ is strongly representable in  $B$.
\end{definition}

\begin{lemma}
Assume that $A\subset X$ is strongly finitely representable in $B\subset Y$. Then for all $\eps>0$ and for every $(n,m)$-polyhedron $$H=H(x_0,x_1,\cdots,x_{n+m})\;\;{\rm with\;}\;{\rm vert}(H)\subset A,$$ there exist an
$(n,m)$-polyhedron $$H^\prime=H^\prime(y_0,y_1,\cdots,y_{n+m})\;\;{\rm with}\; {\rm vert}(H^\prime)\subset B$$
and an affine isomorphism $T:{\rm aff}(H)\rightarrow{\rm aff}(H^\prime)$ such that
\begin{equation}
(1-\eps)\|x-y\|\leq\|Tx-Ty\|\leq(1+\eps)\|x-y\|,\;\;\forall\;x,y\in{\rm aff}(H),
\end{equation}
\begin{equation}
T(S)=S^\prime,\;\;{\rm and\;\;} d_\mathfrak{H}\Big({\rm vert}(T(H)),{\rm vert}(H^\prime)\Big)<\eps,
\end{equation}
where $d_\mathfrak{H}$ is the Hausdorff metric on $\mathscr B(Y)$, and $S$ (resp. $S^\prime$) is the $n$-simplex ${\rm co}(x_0,x_1,\cdots,x_n)$ (resp. ${\rm co}(y_0,y_1,\cdots,y_n)$).
\end{lemma}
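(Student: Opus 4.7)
\medskip

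\noindent\emph{Proof plan.} The plan is to apply Definition 2.5 at an auxiliary tolerance $\eps_0 \in (0,\eps)$, use Lemma 2.4 to pair up vertices via a permutation, relabel the resulting polyhedron accordingly, and then replace the affine isomorphism produced by the definition with the unique affine map sending $x_j \mapsto y_j$ for $j = 0, 1, \ldots, n$. Such a map sends $S$ onto $S'$ by construction, and a perturbation argument preserves the distortion and Hausdorff bounds up to a constant multiple of $\eps_0$.

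\medskip

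Concretely, I would fix $H = H(x_0, \ldots, x_{n+m})$ and $\eps>0$, and choose an auxiliary $\eps_0 \in (0, \eps)$ to be determined at the end in terms of $\eps$, $n$, and the geometric data of $H$. Applying Definition 2.5 with tolerance $\eps_0$ furnishes an $(n,m)$-polyhedron $H_0 = H_0(z_0, \ldots, z_{n+m})$ with vertices in $B$ and an affine isomorphism $T_0 : {\rm aff}(H) \to {\rm aff}(H_0)$ of distortion at most $1+\eps_0$ satisfying $d_\mathfrak{H}({\rm vert}(T_0 H), {\rm vert}(H_0)) < \eps_0$. Since $T_0(H)$ is itself an $(n,m)$-polyhedron with minimum vertex separation at least $(1-\eps_0)\min_{i\neq j}\|x_i - x_j\|$, Lemma 2.4 applies for $\eps_0$ small enough and produces a permutation $\pi$ of $\{0, 1, \ldots, n+m\}$ with $\|T_0 x_j - z_{\pi(j)}\| < \eps_0$ for every $j$. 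I would then set $y_j := z_{\pi(j)}$.

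\medskip

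The critical point is that $(y_0, \ldots, y_n)$ remains affinely independent, so that $H' := H_0$ with the new labeling is still an $(n,m)$-polyhedron and $S' := {\rm co}(y_0, \ldots, y_n)$ is a bona fide $n$-simplex; this holds because $T_0 x_0, \ldots, T_0 x_n$ are affinely independent and affine independence is an open condition in the fixed, finite-dimensional affine space ${\rm aff}(H_0)$, so perturbing each $T_0 x_j$ by at most $\eps_0$ preserves it for $\eps_0$ small relative to their conditioning. I would then let $T : {\rm aff}(H) \to {\rm aff}(H')$ be the unique affine map with $T x_j = y_j$ for $j = 0, \ldots, n$, so that $T(S) = S'$ automatically. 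Since $\|T x_j - T_0 x_j\| < \eps_0$ for $j = 0, \ldots, n$, a standard finite-dimensional linear-algebra estimate bounds the linear part of $T - T_0$ on ${\rm aff}(H) - x_0$ by $C\eps_0$, with $C$ depending only on the (fixed) conditioning of the basis $x_1 - x_0, \ldots, x_n - x_0$; combining this with the distortion of $T_0$ yields $(1-\eps)\|u-v\| \leq \|Tu - Tv\| \leq (1+\eps)\|u-v\|$ for $\eps_0$ small enough, and the same comparison evaluated at $x_{n+1}, \ldots, x_{n+m}$ gives $\|T x_j - y_j\| < (C+1)\eps_0$, hence $d_\mathfrak{H}({\rm vert}(TH), {\rm vert}(H')) < \eps$ after choosing $\eps_0 < \eps/(C+1)$. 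The main obstacle I anticipate is the bookkeeping that coordinates these two sources of $\eps_0$-dependence (affine independence of the relabeled vertices, and the distortion perturbation), with constants determined by the conditioning of the fixed polyhedron $H$; since $H$ is fixed, a single sufficiently small $\eps_0$ works, but making the dependence fully explicit is the only nontrivial calculation the argument requires.
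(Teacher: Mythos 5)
Your proposal is correct and follows essentially the same route as the paper: apply the definition of strong finite representability at a finer tolerance, use Lemma 2.4 to match and relabel vertices, then replace the given affine isomorphism by the (uniquely determined) affine map sending $x_j$ to the matched vertex $y_j$ for $j=0,\ldots,n$ — which is exactly the paper's corrected map $W_k=U_kT_k$ — and control the resulting perturbation of the distortion and of the Hausdorff distance. The only cosmetic difference is that the paper phrases the perturbation argument as a limit along a sequence $k\to\infty$ rather than via an explicit auxiliary tolerance $\eps_0$ with constants depending on the conditioning of $H$.
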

\begin{proof}
Let $x_c=\sum_{i=0}^n\lambda_ix_i$ (for some $\lambda_i>0$, $i=0,1,\cdots, n$ with $\sum_{i=0}^n\lambda_i=1$) be a barycenter of the $n$-simplex $S$.
Then there is $r>0$ so that\begin{equation}r(H-x_c)\subset S-x_c.\end{equation} 

Since  $A\subset X$ is strongly finitely representable in $B\subset Y$, for each $k\in\N$, there exist an $(n,m)$-polyhedron $H_k\subset Y$ with $${\rm vert}(H_k)=(z_{0k},z_{1k},\cdots,z_{(n+m)k})\subset B$$ and an affine isomorphism $T_k:{\rm aff}(H)\rightarrow{\rm aff}(H_k)$ satisfying that
\begin{equation}
(1-\frac{1}k)\|x-y\|\leq\|T_k x-T_k y\|\leq(1+\frac{1}k)\|x-y\|,\;\;\forall\;x,y\in{\rm aff}(H),
\end{equation}
and that
\begin{equation}
d_\mathfrak{H}\Big({\rm vert}(T_k(H)),{\rm vert}(H_k)\Big)<\frac{1}k.
\end{equation}
By Lemma 2.4, for every sufficiently large $k\in\N$, there is a permutation $\pi: \{0,1,\cdots,n+m\}\rightarrow \{0,1,\cdots,n+m\}$ 
such that
$$\|T_k x_{\pi(j)}-z_{jk}\|<\frac{1}k,\;\;j=0,1,\cdots,n+m.$$ 
Without loss of generality, we can assume
\begin{equation}\|T_k x_{j}-z_{jk}\|<\frac{1}k,\;\;j=0,1,\cdots,n+m;\;k\in\N.\end{equation} 
Since for each $k\in\N$,  $T_k:{\rm aff}(H)\rightarrow{\rm aff}(H_k)$ is an affine isomorphism, and since $(x_0,x_1,\cdots,x_n)$ is a maximal affinely independent subset of ${\rm vert}(H)$, $(T_kx_0,T_kx_1,\cdots,T_kx_n)$ is again a maximal affinely independent subset of the vertex set of ${\rm vert}(T_k(H))=(T_kx_0,T_kx_1,\cdots,T_kx_{n+m})$ of the $(n,m)$-polyhedron $T_k(H)$.

It follows from (2.13), (2.14) and (2.16) that for each sufficiently large $k\in\N$, the $n+1$ vectors $z_{0k},z_{1k},\cdots,z_{nk}$ are affinely independent. Therefore, $(z_{0k},z_{1k},\cdots,z_{nk})$ is maximal affinely independent subsets of $$(z_{0k},z_{1k},\cdots,z_{nk},z_{(n+1)k},\cdots,z_{(n+m)k}).$$
We write \begin{equation}v_{jk}=T_kx_{j},\;j=0,1,\cdots,n+m;\end{equation}  
and let
 \begin{equation}S_k={\rm co}(z_{0k},z_{1k},\cdots,z_{nk}). \end{equation}
 Then it follows from (2.16) and (2.17) that
 \begin{equation}
 T_k(S)={\rm co}(v_{0k},v_{1k},\cdots,v_{nk}),
 \end{equation}
 and
 \begin{equation}d_\mathfrak{H}\Big({\rm vert}(T_k(S)),{\rm vert}(S_k)\Big)\rightarrow 0,\;{\rm as\;}k\rightarrow\infty.\end{equation}
 Since $(z_{jk})_{j=0}^n$  is also a maximal affinely independent subset of ${\rm vert}(H_k)$,
we can define an affine isomorphism $U_k: {\rm aff}[T_k(H)](={\rm aff}(H_k))\rightarrow{\rm aff}(H_k)$ satisfying
\begin{equation}U_k(v_{jk})=z_{jk},\;j=0,1,\cdots,n.\end{equation}
Therefore,
\begin{equation}W_k(S)=(U_kT_k)(S)=U_k(T_k(S))=S_k,\end{equation}
where $W_k=U_kT_k:{\rm aff}(H)\rightarrow{\rm aff}(H_k)$ is an affine isomorphism.
Thus, (2.20) is equivalent to that
\begin{equation}d_\mathfrak{H}\Big({\rm vert}(T_k(S)),{\rm vert}(W_k(S))\Big)\rightarrow 0,\;{\rm as\;}k\rightarrow\infty.\end{equation}
Consequently, by Lemma 2.4,
\begin{equation}
\|W_k-T_k\|_{{\rm aff}(H)}\rightarrow0,\;{\rm as\;}k\rightarrow\infty.
\end{equation}
This, incorporating of (2.14) imply that for all sufficiently large $k\in\N$
\begin{equation}(1-\eps)\|x-y\|\leq\|W_kx-W_ky\|\leq(1+\eps)\|x-y\|,\;\;\forall x,y\in {\rm aff}(H),\end{equation}
and which is equivalent to
\begin{equation}(1-\eps)\|x-y\|\leq\|U_kx-U_ky\|\leq(1+\eps)\|x-y\|,\;\;\forall x,y\in {\rm aff}(H_k),\end{equation}
Note that ${\rm aff}(T_k(H))={\rm aff}(H_k)$, ${\rm vert}(T_k(H))=(T_{k}x_j)_{j=0}^{n+m}$ and $${\rm vert}(W_k(H))=(z_{jk})_{j=0}^{n}\cup (W_kx_j)_{j=n+1}^{n+m}.$$ Then it follows from (2.24) that for all sufficiently large $k\in\N$,
\begin{equation}d_\mathfrak{H}\Big({\rm vert}(W_k(H)),{\rm vert}(T_k(H))\Big)<\frac{\eps}2.\end{equation}
This, incorporating of (2.15), entails that for all sufficiently large $k\geq\frac{2}\eps$,
\begin{equation}
\begin{array}{cc}
                d_\mathfrak{H}\Big({\rm vert}(W_k(H)),{\rm vert}(H_k)\Big)
                \leq d_\mathfrak{H}\Big({\rm vert}(W_k(H)),{\rm vert}(T_k(H))\Big)\\
                +d_\mathfrak{H}\Big({\rm vert}(T_k(H)),{\rm vert}(H_k)\Big)<\frac{\eps}2+\frac{1}k<\eps.
                \end{array}
\end{equation}

Now, we fix  such a sufficiently large $k\in\N$, and let $T=W_k$, $H^\prime=H_k$ and $S^\prime=S_k$.  Then we finish the proof by (2.22), (2.25) and (2.28).
\end{proof}
The following result follows directly from Lemma 2.6.
\begin{corollary}
Suppose that $A, B$ are two subsets of a Banach space $X$. If $A$ is strongly finitely representable in $B$, then it is finitely representable in $B$.
\end{corollary}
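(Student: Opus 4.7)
The plan is to reduce this to Lemma 2.6 by viewing each $n$-simplex as the degenerate case of an $(n,m)$-polyhedron with $m=0$. The notions of strong and weak finite representability differ only in that the former carries the extra information of a Hausdorff-distance bound on the vertex set of the entire polyhedron, but when there are no ``extra'' vertices beyond an affinely independent spanning set, the definitions line up immediately.

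More precisely, fix $\varepsilon>0$ and let $S=\mathrm{co}(x_0,x_1,\dots,x_n)$ be an arbitrary $n$-simplex with $(x_j)_{j=0}^n\subset A$. Regard $S$ as the $(n,0)$-polyhedron $H=H(x_0,x_1,\dots,x_n)$; here $\mathrm{vert}(H)=(x_0,\dots,x_n)$ and $(x_0,\dots,x_n)$ is itself the maximal affinely independent subset, so $S$ and $H$ literally coincide. Since $A$ is strongly finitely representable in $B$, Lemma 2.6 applied to this $H$ yields an $(n,0)$-polyhedron $H'=H'(y_0,y_1,\dots,y_n)$ with $(y_j)_{j=0}^n\subset B$ and an affine isomorphism $T\colon \mathrm{aff}(H)\to\mathrm{aff}(H')$ satisfying
\begin{equation*}
(1-\varepsilon)\|x-y\|\le\|Tx-Ty\|\le(1+\varepsilon)\|x-y\|,\qquad \forall\,x,y\in\mathrm{aff}(H),
\end{equation*}
together with $T(S)=S'$, where $S'=\mathrm{co}(y_0,\dots,y_n)$ is the associated $n$-simplex. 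Because $m=0$, we have $S'=H'$, so $S'$ is itself an $n$-simplex with vertices in $B$.

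It remains only to match this conclusion to Definition 2.1(i). The equation $T(S)=S'$ between $n$-simplices forces $T$ to carry the vertex set $(x_j)_{j=0}^n$ of $S$ bijectively onto the vertex set $(y_j)_{j=0}^n$ of $S'$; after relabeling the $y_j$'s by the resulting permutation we obtain $T(x_j)=y_j$ for $j=0,1,\dots,n$, which is the required condition. Combined with the two-sided norm bound displayed above, this shows that $A$ is $\varepsilon$-finitely representable in $B$. As $\varepsilon>0$ was arbitrary, $A$ is finitely representable in $B$, which is the statement of the corollary.

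There is no real obstacle here: the corollary is essentially a bookkeeping consequence of Lemma 2.6 once one recognizes that the polyhedron framework specializes to simplexes when $m=0$, and that the Hausdorff-distance condition (2.11) becomes vacuous in that case since $\mathrm{vert}(T(H))$ and $\mathrm{vert}(H')$ are already forced to agree under $T(S)=S'$.
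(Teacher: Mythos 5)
Your proof is correct and takes the same route as the paper, which simply asserts that the corollary ``follows directly from Lemma 2.6''; your writeup is the natural elaboration of that one-line argument, specializing to $(n,0)$-polyhedrons and noting that $T(S)=S'$ is exactly the equivalent form of the vertex-matching condition already allowed in Definition 2.1(i).
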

\begin{proposition}
Suppose that $X$ and $Y$ are Banach spaces. Then $X$ is strongly finitely representable in $Y$ if (and only if) $X$ is  finitely representable in $Y$.
\end{proposition}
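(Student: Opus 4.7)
The ``only if'' direction is immediate from Corollary 2.7, so the plan is to prove the converse: if $X$ is finitely representable in $Y$, then $X$ is strongly finitely representable in $Y$. The guiding observation is that because $Y$ is a \emph{whole} Banach space, I can realize an arbitrary target polyhedron as the exact image of an affine isomorphism produced from a near-isometric linear embedding of a finite-dimensional subspace of $X$ into $Y$; consequently the Hausdorff perturbation tolerated by (2.10) will not be needed at all, and I can hit equality in the vertex matching.

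Fix $\eps>0$ and an $(n,m)$-polyhedron $H=H(x_0,\ldots,x_{n+m})$ in $X$ with $(x_0,\ldots,x_n)$ a maximal affinely independent subset of $\mathrm{vert}(H)$. I first pass to the $n$-dimensional subspace $F=\mathrm{span}(x_1-x_0,\ldots,x_n-x_0)\subset X$, so that $\mathrm{aff}(H)=x_0+F$. Classical finite representability of $X$ in $Y$ produces an $n$-dimensional subspace $G\subset Y$ and a linear isomorphism $T_0\colon F\to G$ which, after a harmless rescaling, satisfies $(1-\eps)\|u\|\leq\|T_0 u\|\leq(1+\eps)\|u\|$ for every $u\in F$. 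Choosing any $y_0\in Y$, I define the affine map $T\colon\mathrm{aff}(H)\to Y$ by $T(x)=y_0+T_0(x-x_0)$, set $y_j:=T(x_j)$ for $j=0,1,\ldots,n+m$, and put $H':=\mathrm{co}(y_0,\ldots,y_{n+m})=T(H)$. The isometry condition (2.9) is then built into the construction.

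What remains is to confirm that $H'$ is a bona fide $(n,m)$-polyhedron with $\mathrm{vert}(H')=\{y_0,\ldots,y_{n+m}\}$ and that (2.10) is satisfied. Since $T$ is an affine isomorphism from $\mathrm{aff}(H)$ onto $\mathrm{aff}(H')=y_0+G$, it is injective and preserves affine independence, so the $y_j$ are pairwise distinct and $(y_0,\ldots,y_n)$ is maximally affinely independent. Moreover affine isomorphisms carry extreme points of a convex set bijectively onto those of its image, hence $\mathrm{vert}(T(H))=\{y_0,\ldots,y_{n+m}\}=\mathrm{vert}(H')$, so the Hausdorff distance in (2.10) is identically zero. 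I do not anticipate any serious obstacle; the only delicate step is the standard translation between the classical linear formulation of finite representability and the affine form encoded by (2.9), which is handled by the change of origin $x\mapsto x-x_0$ together with the free choice of basepoint $y_0\in Y$ in $Y$.
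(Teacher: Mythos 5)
Your proposal is correct and follows essentially the same route as the paper: both apply classical (linear) finite representability to a finite-dimensional subspace carrying $H$ (you use the difference space of $\mathrm{aff}(H)$ and translate, the paper uses $\mathrm{span}(H)$), and both take $H'=T(H)$ exactly, so that the Hausdorff condition (2.10) holds trivially with distance zero.
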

\begin{proof}
Since $X$ is  finitely representable in $Y$, for any $n  (\leq{\rm dim}(X))$ dimensional subspace $X_n$ of $X$, and for any $\eps>0$, there exist an $n$-dimensional subspace $Y_n$ of $Y$ and a linear isomorphism $T:X_n\rightarrow Y_n$
such that the Mazur distance $1\leq\|T\|\|T^{-1}\|\leq1+\eps$. Without loss of generality, we can assume $\|T^{-1}\|=1$,
\begin{equation}\|x-y\|=\|T^{-1}(Tx-Ty)\|\leq\|Tx-Ty\|\leq(1+\eps)\|x-y\|,\;\;x,y\in X_n.\end{equation}
Given any $(n,m)$-polyhedron $H\subset X$, let $X_k={\rm span}(H)$, where $k\in\{n,n+1\}$ is the dimensional of ${\rm span}(H)$. Then  there exist an $k$-dimensional subspace $Y_k$ of $Y$ and a linear isomorphism $T:X_k\rightarrow Y_k$ satisfying\
\begin{equation}\|x-y\|=\|T^{-1}(Tx-Ty)\|\leq\|Tx-Ty\|\leq(1+\eps)\|x-y\|,\;\;x,y\in X_k.\end{equation}
 Then $H^\prime\equiv T(H)$ is again an $(n,m)$-polyhedron in $Y$. We finish the proof by this fact and (2.30).
\end{proof}
\begin{remark}
 It follows from Corollary 2.7 and Proposition 2.8 that the strong finite representability implies the finite representability, and the two notions are equivalent whenever both $A$ and $B$ are the whole spaces. But they are not equivalent in general. For example,
let $X=Y=\ell^2_2$, $A=\{(0,0),(1,0),(0,1),(1,1)\}$, $B=\{(0,0),(1,0),(0,1)\}$. Then $A$ is finitely representable in $B$. However, $A$ is not strongly finitely representable in $B$.
\end{remark}


\section{Ultraproducts of subsets in Banach spaces}
A filter $\mathcal F$ on a set $I$ is a family of subsets of $I$ satisfying that a) $\emptyset\notin\mathcal F$; b) for all $n\in\N$, $(F_j)_{j=1}^n\subset\mathcal F\Longrightarrow \cap_jF_j\in\mathcal F;$ and c) $A\in\mathcal F,\;{\rm and\;} A\subset B\subset I\Longrightarrow B\in\mathcal F$. A filter $\mathcal F$ is called a free filter if $\bigcap\{F\in\mathcal F\}=\emptyset$. We say that a filter $\mathcal F$ is an ultrafilter if it satisfies that  either $S\in\mathcal F$ or $I\setminus S\in\mathcal F$ for any $S\subset I$.  We will always use $\mathcal U$ to denote an ultrafilter on a set $I$.

Let $X_i,\;i\in I$ be Banach spaces, $A_i\subset X_i,\;i\in I,$ and let $\prod_IA_i$ be the Cartesian product of these sets, i.e. the set of all families $(a_i)_{i\in I}$ with $a_i\in A_i$. Two families $(a_i), \;(b_i)$ are said to be equivalent with respect to the ultrafilter $\mathcal U$, if for every $\eps>0$
$$\{i\in I: \|a_i-b_i\|<\eps\}\in\mathcal U.$$
This defines an equivalence relation on $\prod_IA_i$.

\begin{definition}
The set of all equivalence classes of $\prod_IA_i$ with respect to the ultrafilter $\mathcal U$ is called the ultraproduct of the subsets $(A_i)_{i\in I}$ denoted by $(A_i)_\mathcal U.$
In particular, if $A_i\equiv A$ for all $i\in I$, we simply denote $(A_i)_\mathcal U$ by $(A)_\mathcal U,$ and call it the ultrapower of $A$.
\end{definition}
By Definition 3.1, the next property follows easily.
\begin{proposition}
Let $E_i,\; {i\in I}$ be a family of Banach spaces, and $A_i, B_i\subset E_i, \;{i\in I}$. Then for every ultrafilter $\mathcal U$ on $I$,
\begin{eqnarray}\nonumber
(A_i\cup B_i)_\mathcal U=(A_i)_\mathcal U\cup (B_i)_\mathcal U;\\\nonumber (A_i\cap B_i)_\mathcal U=(A_i)_\mathcal U\cap (B_i)_\mathcal U;\\\nonumber
(A_i\setminus B_i)_\mathcal U=(A_i)_\mathcal U\setminus (B_i)_\mathcal U.\;\;
\end{eqnarray}
\end{proposition}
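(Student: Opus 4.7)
The plan is to verify each of the three set identities by a bookkeeping argument on representatives, with the key tool being the ultrafilter dichotomy: for any $S\subseteq I$, either $S\in\mathcal U$ or $I\setminus S\in\mathcal U$. Throughout I will use that an equivalence class $[(c_i)]$ lies in $(A_i)_{\mathcal U}$ precisely when it has some representative $(a_i)\in\prod_I A_i$, equivalently when a given family can be modified on a $\mathcal U$-null set of indices so that every coordinate falls in the corresponding $A_i$.

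First I would handle the union. The inclusion $(A_i)_{\mathcal U}\cup(B_i)_{\mathcal U}\subseteq(A_i\cup B_i)_{\mathcal U}$ is immediate from monotonicity of the ultraproduct in the underlying subsets. For the reverse direction, take $[(c_i)]\in(A_i\cup B_i)_{\mathcal U}$ with a representative satisfying $c_i\in A_i\cup B_i$ for every $i$, and set $J=\{i\in I:c_i\in A_i\}$. The dichotomy forces either $J\in\mathcal U$ or $I\setminus J\subseteq\{i:c_i\in B_i\}$ to lie in $\mathcal U$; in either case, modifying the family on the complementary index set by arbitrary points of $A_i$ or $B_i$ produces a representative in $\prod A_i$ or $\prod B_i$, placing $[(c_i)]$ in the appropriate factor of the right-hand side.

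The intersection and set-difference identities follow the same pattern. For the intersection, the forward inclusion is monotonicity; for the reverse, given representatives $(a_i)\in\prod A_i$ and $(b_i)\in\prod B_i$ of the same class, I would apply the dichotomy to the partition $I=\{i:a_i\in B_i\}\sqcup\{i:a_i\notin B_i\}$ and, after a symmetric analysis using the representative $(b_i)$, extract a representative supported in $\prod(A_i\cap B_i)$. For the set difference I would decompose $A_i=(A_i\setminus B_i)\sqcup(A_i\cap B_i)$, apply the union identity to obtain $(A_i)_{\mathcal U}=(A_i\setminus B_i)_{\mathcal U}\cup(A_i\cap B_i)_{\mathcal U}$, and then subtract $(B_i)_{\mathcal U}$ using the intersection identity.

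The main obstacle I expect to manage is clerical rather than conceptual: since membership in $(A_i)_{\mathcal U}$ is a property of a class and not of a fixed family, the witness sets produced by the dichotomy depend on the chosen representative, and care must be taken when combining two distinct representatives at the intersection step. Once the translation between existential statements about representatives and $\mathcal U$-large-set statements is made explicit, the three identities all reduce to elementary applications of the ultrafilter axioms, consistent with the author's remark that the proposition follows easily from Definition 3.1.
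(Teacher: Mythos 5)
Your argument for the first identity (the union) is correct, and since the paper states Proposition 3.2 without proof ("follows easily from Definition 3.1"), this is presumably the intended argument: apply the ultrafilter dichotomy to $J=\{i:c_i\in A_i\}$ and use the fact that altering a family on a $\mathcal U$-null set of indices does not change its equivalence class. The only caveat is degenerate: if some $A_i=\emptyset$ off $J$ you cannot "modify by arbitrary points of $A_i$", and the union identity can indeed fail when some factors are empty; this is harmless for the constant families to which the paper actually applies it.

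The intersection step, however, contains a genuine gap, and it cannot be repaired because the second and third identities are false as stated. Your dichotomy on $\{i:a_i\in B_i\}$ leaves you stranded in the case $\{i:a_i\notin B_i\}\in\mathcal U$: equivalence of $(a_i)$ and $(b_i)$ under Definition 3.1 only asserts $\|a_i-b_i\|<\eps$ on $\mathcal U$-large sets, not $a_i=b_i$, so no contradiction arises and no representative in $\prod_I(A_i\cap B_i)$ can be extracted. Concretely, take $I=\N$, $E_i=\R$, $A_i=\{0\}$, $B_i=\{1/i\}$, and $\mathcal U$ any free ultrafilter on $\N$. Then $(0)_i$ and $(1/i)_i$ are equivalent, so $(A_i)_\mathcal U=(B_i)_\mathcal U$ is a single point of $(\R)_\mathcal U$, while $A_i\cap B_i=\emptyset$ for every $i$, whence $(A_i\cap B_i)_\mathcal U=\emptyset\subsetneq(A_i)_\mathcal U\cap(B_i)_\mathcal U$. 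The same example refutes the set-difference identity, since $(A_i\setminus B_i)_\mathcal U=(A_i)_\mathcal U\neq\emptyset=(A_i)_\mathcal U\setminus(B_i)_\mathcal U$; your reduction of the third identity to the first two therefore inherits the problem. (Both identities would hold for nonempty sets under the purely set-theoretic equivalence $\{i:a_i=b_i\}\in\mathcal U$; the failure is caused entirely by the metric equivalence in Definition 3.1.) Note that the paper only ever invokes the union part of Proposition 3.2 (in the proof of Lemma 5.2), so its main results are unaffected, but you should report the second and third identities as incorrect rather than attempt to prove them.
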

It was shown in \cite[Prop.2.4]{cheng1} that for two Banach spaces $X$ and $Y$, and for two bounded subsets $A\subset X$ and $B\subset Y$, if $A$ is finitely representable in $B$, then for every affinely independent subset $A_0$ of $A$, there exist a free ultrafilter $\mathcal U$ on some index set $I$ and an affine isometry $T: {\rm aff}(A_0)\rightarrow ({\rm aff}(B))_\mathcal U$ with $T(A_0)\subset (B)_\mathcal U.$  If $A$ is strongly finitely representable in $B$, then we can further show the following result.
\begin{lemma}
Suppose that $A$ is a  subset of a Banach space $X$, $B$ is a convex subset of a Banach space $Y$. If $A$ is strongly finitely representable in $B$, then  there exist a free ultrafilter $\mathcal U$ on some index set $I$ and an affine isometry $T: {\rm aff}(A)\rightarrow ({\rm aff}(B))_\mathcal U$ such that  $T(A)\subset (B)_\mathcal U.$

 \end{lemma}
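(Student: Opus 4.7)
The plan is to obtain the desired affine isometry as an ultraproduct limit of the approximating affine maps supplied by strong finite representability (specifically by Lemma 2.6). First, I set up the index set. Let $I$ be the collection of all pairs $i=(F,\eps)$ with $F$ a nonempty finite subset of $A$ and $\eps\in(0,1)$, and regard each $F$ as the vertex set of the $(n,m)$-polyhedron $H_i={\rm co}(F)\subset X$ (choose once and for all a maximal affinely independent subset of $F$ to fix the parameters $n,m$). Partially order $I$ by $(F_1,\eps_1)\preceq(F_2,\eps_2)\Longleftrightarrow F_1\subset F_2$ and $\eps_2\leq\eps_1$. The sets $U_{(F_0,\eps_0)}=\{(F,\eps)\in I:F\supset F_0,\;\eps\leq\eps_0\}$ form a filter base; extend it to a free ultrafilter $\mathcal U$ on $I$.

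Next, for every $i=(F,\eps)\in I$, invoke Lemma 2.6 to produce an $(n,m)$-polyhedron $H_i'\subset Y$ with ${\rm vert}(H_i')\subset B$ and an affine isomorphism $T_i:{\rm aff}(H_i)\rightarrow{\rm aff}(H_i')$ such that
\[
(1-\eps)\|x-y\|\leq\|T_ix-T_iy\|\leq(1+\eps)\|x-y\|,\quad x,y\in{\rm aff}(H_i),
\]
and $d_\mathfrak{H}({\rm vert}(T_i(H_i)),{\rm vert}(H_i'))<\eps$. In particular, for each vertex $a\in F$ of $H_i$ there is a vertex $b_i(a)\in B$ of $H_i'$ with $\|T_i(a)-b_i(a)\|<\eps$. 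Now define $T:{\rm aff}(A)\rightarrow({\rm aff}(B))_\mathcal U$ as follows. For each $a\in A$ set $v_i(a)=T_i(a)$ whenever $a\in F$, and pick $v_i(a)$ to be any fixed point of ${\rm aff}(B)$ otherwise; since on the tail $U_{(\{a\},1)}\in\mathcal U$ the vectors $v_i(a)$ are uniformly bounded (being close to vertices of $H_i'\subset B$, and $B$ itself need not be bounded but only the relevant images matter modulo $\mathcal U$), the family $(v_i(a))_{i\in I}$ determines a class $T(a)=[(v_i(a))_i]\in({\rm aff}(B))_\mathcal U$. Extend $T$ affinely to ${\rm aff}(A)$ by $T\bigl(\sum_j\lambda_ja_j\bigr)=\sum_j\lambda_jT(a_j)$ for every finite affine combination with $a_j\in A$.

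To verify that $T$ is an affine isometry on ${\rm aff}(A)$, fix $x,y\in{\rm aff}(A)$ and write them as affine combinations of finitely many points $a_1,\ldots,a_k\in A$. For every index $i=(F,\eps)\in U_{(\{a_1,\ldots,a_k\},\eps_0)}$ the map $T_i$ is defined and affine on ${\rm aff}(F)\supset{\rm aff}(x,y)$, and the sandwich inequality above gives $(1-\eps)\|x-y\|\leq\|T_i(x)-T_i(y)\|\leq(1+\eps)\|x-y\|$. Since $\eps\to 0$ along $\mathcal U$, passing to the ultralimit yields $\|T(x)-T(y)\|=\|x-y\|$. For the containment $T(A)\subset(B)_\mathcal U$, observe that for every $a\in A$ and every $i\in U_{(\{a\},\eps_0)}$ one has $b_i(a)\in B$ with $\|v_i(a)-b_i(a)\|<\eps\leq\eps_0$; therefore the families $(v_i(a))$ and $(b_i(a))$ are equivalent modulo $\mathcal U$, so $T(a)=[(b_i(a))_i]\in(B)_\mathcal U$.

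The main obstacle is the bookkeeping that makes the patchwork of maps $T_i$, each living on its own affine hull ${\rm aff}(H_i)$, collapse to a single well-defined affine isometry on all of ${\rm aff}(A)$; the ultrafilter refinement by cofinal tails $U_{(F,\eps)}$ is precisely what forces this consistency (both independence of the affine representation of $x$ and the isometric identity). A second delicate point is that for membership in $(B)_\mathcal U$ (and not merely in $({\rm co}(B))_\mathcal U$ or $(\overline{B})_\mathcal U$) one uses the sharper conclusion of Lemma 2.6: the image of each vertex is within $\eps$ of a genuine element of $B$, which is enough to replace the family $(v_i(a))$ by a family in $B$ without changing its $\mathcal U$-equivalence class. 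The convexity of $B$ is needed so that the $(n,m)$-polyhedron $H_i'$, which has its vertices in $B$, is itself contained in $B$, ensuring that only the vertex approximations—not interior points of $H_i'$—need to be corrected.
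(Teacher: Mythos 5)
Your proof is correct in substance and rests on the same engine as the paper's --- taking an ultralimit of the almost-isometric affine maps supplied by Lemma 2.6 --- but the bookkeeping is genuinely different and arguably cleaner. The paper splits into three cases (finite $A$; infinite $A$ spanning a finite-dimensional subspace, handled via a dense sequence and the closedness of $(B)_{\mathcal N}$ in $(X)_{\mathcal N}$; infinite-dimensional $A$, handled via a product ultrafilter $\mathcal N\times\mathcal V$ over the family of finite-dimensional pieces ordered by cofinal inclusion), whereas you collapse everything into a single free ultrafilter on the directed set of pairs $(F,\eps)$ with $F\subset A$ finite. This buys a one-step construction, avoids the iterated ultrafilter, and makes the well-definedness of the affine extension transparent: any two affine representations of a point $x$ are reconciled on the tail $U_{(F_0,\eps_0)}$ with $F_0$ containing all the points involved, since there a single affine map $T_i$ computes both. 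Your version also never needs the density/closedness argument of the paper's Case II.

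One point to tighten: you ``regard each $F$ as the vertex set of $H_i={\rm co}(F)$,'' but by Definition 2.2 the vertex set of ${\rm co}(F)$ is its set of extreme points, which can be a proper subset of $F$. Lemma 2.6 therefore only guarantees that the images of the genuine vertices are $\eps$-close to points of $B$, so your sentence ``for each vertex $a\in F$ of $H_i$ there is a vertex $b_i(a)\in B$ of $H_i'$'' does not literally cover a non-extreme $a\in F$. The fix is one line and uses the convexity of $B$ exactly where your closing paragraph invokes it: for any $a\in F\subset H_i$ one has $T_i(a)\in T_i(H_i)={\rm co}\big({\rm vert}(T_i(H_i))\big)\subset H_i'+\eps B_Y\subset B+\eps B_Y$, so $T_i(a)$ is still within $\eps$ of a point of $B$ and the $\mathcal U$-equivalence argument for $T(A)\subset(B)_{\mathcal U}$ goes through for every $a\in A$.
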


\begin{proof}
Case I. $A$ is finite, say, $A=(z_1,z_2,\cdots,z_k)$ for some $k\in\N$. Let $H={\rm co}(z_1,\cdots,z_k)$. Then $H$ is an $(n,m)$-polyhedron  for some integers $n,m\geq0$. Let $$(x_0,x_1,\cdots,x_n,u_1,\cdots,u_m)={\rm vert}(H),$$ where $(x_0,x_1,\cdots,x_n)$ is  a maximal affinely independent subset of ${\rm vert}(H)$. Let $S={\rm co}(x_0,x_1,\cdots,x_n).$  Since $A$ is strongly finitely representable in $B$, by Lemma 2.6, for every $j\geq1$, there exist an $(n,m)$-polyhedron $$H^\prime_j={\rm co}(y_{0j},y_{1j},\cdots,y_{nj},v_{1j},\cdots,v_{mj})$$ with $${\rm vert}(H_j^\prime)=(y_{0j},y_{1j},\cdots,y_{nj},v_{1j},\cdots,v_{mj})\subset B,$$ (where $(y_{0j},y_{1j},\cdots,y_{nj})$ is a maximal affinely independent subset of ${\rm vert}(H_j^\prime)$) and an affine mapping $T_{H,j}: {\rm aff}(H)\rightarrow {\rm aff}(H^\prime_j)$ with $$T_{H,j}(S)={\rm co}(y_{0j},y_{1j},\cdots,y_{nj})\equiv S^\prime_j$$ such that
\begin{equation}
(1-\frac{1}j)\|x-y\|\leq\|T_{H,j}x-T_{H,j}y\|\leq(1+\frac{1}j)\|x-y\|,\;\forall x, y\in {\rm aff}(H),
\end{equation}
and the Hausdorff metric
\begin{equation}
d_\mathfrak{H}\Big({\rm vert}(T_{H,j}(H)),{\rm vert}(H^\prime_j)\Big)<\frac{1}j.
\end{equation}
Since ${\rm vert}(H_j^\prime)\subset B$, and since $B$ is convex, $H^\prime_j\subset B.$

For any fixed free ultrafilter $\mathcal N$ on $\N$, we define an affine mapping $T_H: {\rm aff}(H)\rightarrow ({\rm aff}(B))_\mathcal N$ by
\begin{equation}
T_H(x)=(T_{H,j}(x))_\mathcal N,\;\;x\in{\rm aff}(H).
\end{equation}
Then it is easy to check that $T_H$ is an affine isometry with $T_H(H)\subset (B)_\mathcal N.$

Case II. $A$ is contained in a finite dimensional subspace of $X$, but its cardinality $A^\sharp=\infty$. Let ${\rm dim}[{\rm aff}(A)]=n$, and $(z_0,z_1,\cdots,z_n)\subset A$ be a maximal affinely independent subset of $A$. Then ${\rm aff}(z_0,z_1,\cdots,z_n)={\rm aff}(A)$.
Fix any sequence $(w_1,w_2,\cdots)\subset A$, which is dense in $A$.  For each $j\in\N$, let $$H_j={\rm co}(z_0,z_1,\cdots,z_n,w_1,\cdots,w_j).$$ Since $H_j$ contains the $n$-simplex ${\rm co}(z_0,z_1,\cdots,z_n)$,  $H_j$ is an $(n,m)$-convex polyhedron for some $m\geq0$ and satisfying $H_j\subset H_{j+1}$.
Let $${\rm vert}(H_j)=(x_0,x_1,\cdots,x_n,u_1,\cdots,u_m) (\subset (z_0,z_1,\cdots,z_n,w_1,\cdots,w_j))$$
such that $S\equiv{\rm co}(x_0,x_1,\cdots,x_n)$ is an $n$-simplex.

Since $A$ is strongly finitely representable in $B$, by Lemma 2.6, there exist an $(n,m)$-polyhedron $$H^\prime_j=H^\prime_j(y_{0j},y_{1j},\cdots,y_{nj},v_{1j},\cdots,v_{mj})\subset B$$
satisfying that  $S_j^\prime={\rm co}(y_{0j},y_{1j},\cdots,y_{nj})$ is an $n$-simplex,
 and an affine mapping $$T_{A,j}: {\rm aff}(A)\rightarrow {\rm aff}(B)$$ such that
\begin{equation}
(1-\frac{1}j)\|x-y\|\leq\|T_{A,j}x-T_{A,j}y\|\leq(1+\frac{1}j)\|x-y\|,\;\forall x, y\in {\rm aff}(A),
\end{equation}
with
\begin{equation}
T_{A,j}(S)=S^\prime_j,\;\;{\rm and\;\;} d_\mathfrak{H}\Big({\rm vert}(T_{A,j}(H_j)),{\rm vert}(H_j^\prime)\Big)<\frac{1}j.
\end{equation}

For any fixed free ultrafilter $\mathcal N$ on $\N$, we define an affine mapping $T_A: {\rm aff}(A)\rightarrow ({\rm aff}(B))_\mathcal N$ by
\begin{equation}
T_A(x)=(T_{A,j}(x))_\mathcal N,\;\;x\in{\rm aff}(A).
\end{equation}
Then it follows from (3.4) and (3.5) that $T_A$ is an affine isometry with $T_A(x)\in (B)_\mathcal N$ for all $x\in (w_j)_{j=1}^\infty$. Since $(B)_\mathcal N$ is always closed in $(X)_\mathcal N$ and since $(w_j)_{j=1}^\infty$ is dense in $A$, we obtain that $T_A(A)\subset(B)_\mathcal N.$

Case III. ${\rm span}(A)$ is an infinite dimensional subspace. Let $$J=\{F (\neq\emptyset)\subset A: {\rm dim}[{\rm span}(F)]<\infty\}.$$
Let $\mathcal F_J$ be the family
 consisting of all cofinal subsets of $J$ ordered by set inclusion. Then it is a free filter on $J$. Choose any  free  ultrafilter  $\mathcal V$ on $J$ containing $\mathcal F_J$, and fix a free ultrafilter $\mathcal N$ on $\N$.
By Cases I and II that we have just proven, for each $F\in J$, there is an affine isometry $T_F: {\rm aff}(F)\rightarrow[{\rm aff}(B)]_\mathcal N$ such that
\begin{equation}
T_F(F)=(T_{F,j}(F))_\mathcal N\subset(B)_\mathcal N.
\end{equation}

Next, fix any $y_0\in (B)_\mathcal N$, and  let $\tilde{T}_F: {\rm aff}(A)\rightarrow ({\rm aff}(B))_\mathcal N$ be defined by
\begin{equation}
\tilde{T}_F(x)= T_F(x),\;{\rm if\;}x\in{\rm aff}(F);\;=y_0,\;{\rm otherwise}.
\end{equation}

Finally, let $\mathcal U$ be the product (free) ultrafilter $\mathcal N\times\mathcal V$, and let $T: {\rm aff}(A)\rightarrow ({\rm aff}(B))_\mathcal U$ be defined for $x\in{\rm aff}(A)$ by
 \begin{equation}
 T(x)=(\tilde{T}_F(x))_\mathcal V=\big((T_{F,j}(x))_\mathcal N\big)_\mathcal V=\big(T_{F,j}(x)\big)_\mathcal U.
 \end{equation}
 Then $T$ is an affine isometry desired.
\end{proof}

\section{Strongly finite representability of sets in their countable subsets }
In this section, we will show that every subset of a Banach space is strongly finitely representable in a countable subset of it. For the proof of this result, we will need a sequence of lemmas.

For $m$ elements $a_j: j=1,2,\cdots,m$ of a set $A$, $(a_1,a_2,\cdots,a_m)$ is either to denote the subset $\{a_j: j=1,2,\cdots,m\}$, or, the ``vector" $(a_1,a_2,\cdots,a_m)$ in the Cartesian product $A^m$. We often blur the distinction if it arises no confusion.

Given $1\leq p\leq \infty$, let $\|\cdot\|_p$ be the $\ell_p$-norm defined on $\mathbb R^n$, i.e.
\begin{equation}
\|x\|_p=(\sum_{j=1}^n|x(j)|^p)^{\frac{1}p},\;\;x=(x(1),x(2),\cdots, x(n))\in\mathbb R^n.
\end{equation}
Let $K$ be the closed unit ball of the  space $\ell^n_\infty\equiv(\R^n,\|\cdot\|_\infty)$,  $$\Omega=\big\{\||\cdot\||\; {\rm is\;a \;seminorm \;on\;}\R^n\;{\rm with\;} \||\cdot\||\leq\|\cdot\|_1\big\},$$
and $C(K)$ be the Banach space of all continuous functions on $K$ with the sup-norm $\|\cdot\|_\infty$.
\begin{lemma}
Assume that $K$ and $\Omega$ are defined as above. Then $\Omega$ is a convex compact set of $C(K)$.
\end{lemma}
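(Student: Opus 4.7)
The plan is to identify each seminorm $\||\cdot\||\in\Omega$ with its restriction to $K$; this identification is injective because a seminorm on $\R^n$ is determined by its values on any neighborhood of $0$ via positive homogeneity, and it lands in $C(K)$ because any seminorm dominated by $\|\cdot\|_1$ is Lipschitz. So $\Omega$ can be regarded as a subset of $C(K)$, and we check convexity and then apply the Arzel\`a--Ascoli theorem to get compactness.

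Convexity is immediate: if $p,q\in\Omega$ and $\lambda\in[0,1]$, then $\lambda p+(1-\lambda)q$ inherits non-negativity, absolute homogeneity and the triangle inequality from $p$ and $q$, and is dominated by $\lambda\|\cdot\|_1+(1-\lambda)\|\cdot\|_1=\|\cdot\|_1$. For compactness, I would verify the three Arzel\`a--Ascoli hypotheses:
\emph{(a) uniform boundedness} --- for every $p\in\Omega$ and every $x\in K$,
$$ p(x)\le\|x\|_1\le n\|x\|_\infty\le n,$$
so $\Omega$ sits in the ball of radius $n$ in $C(K)$;
\emph{(b) equicontinuity} --- since each $p\in\Omega$ satisfies
$$|p(x)-p(y)|\le p(x-y)\le\|x-y\|_1\le n\|x-y\|_\infty,\qquad x,y\in\R^n,$$
the family $\Omega$ is uniformly Lipschitz with constant $n$, hence equicontinuous on $K$.

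The only slightly delicate step is \emph{(c) closedness} of $\Omega$ in $C(K)$. Suppose $p_k\in\Omega$ and $p_k\to f$ uniformly on $K$. I would extend $f$ to a function $\hat f$ on all of $\R^n$ by positive homogeneity: set $\hat f(0)=0$, and for $x\ne 0$, $\hat f(x)=\|x\|_\infty\, f(x/\|x\|_\infty)$. Because each $p_k$ is already positively homogeneous, one checks
$$\hat f(x)=\lim_k p_k(x),\qquad x\in\R^n,$$
i.e.\ $p_k$ converges pointwise on $\R^n$ to $\hat f$. Taking pointwise limits preserves non-negativity, absolute homogeneity, the triangle inequality and the bound $\hat f\le\|\cdot\|_1$; therefore $\hat f$ is a seminorm on $\R^n$ dominated by $\|\cdot\|_1$, so $\hat f\in\Omega$ and its restriction to $K$ is $f$. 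This closes $\Omega$. Combining (a)--(c), Arzel\`a--Ascoli yields the compactness of $\Omega$ in $C(K)$.

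The main (minor) obstacle is making the closedness argument rigorous: one must verify that a uniform limit on $K$ of restrictions of seminorms genuinely extends to a seminorm on the whole of $\R^n$. The positive-homogeneity extension $\hat f$ above handles this cleanly and lets all seminorm axioms pass to the limit; everything else is routine.
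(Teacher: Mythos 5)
Your proof is correct and follows essentially the same route as the paper: uniform boundedness and the uniform Lipschitz bound $n$ give relative compactness via Arzel\`a--Ascoli, and closedness follows because a uniform limit of elements of $\Omega$ is again a seminorm dominated by $\|\cdot\|_1$. The only difference is that you spell out the homogeneity extension in the closedness step, which the paper leaves implicit.
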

\begin{proof}
Convexity of $\Omega$ is trivial. Note that for every $\||\cdot\||\in\Omega$,  $$\||x\||\leq\|x\|_1\leq n\|x\|_\infty,\;\;\forall\;x\in\R^n.$$ This says that the Lipschitz norm of $\||\cdot\||$ is bounded by $n$.
Thus, $\Omega$ is relatively compact in $C(K)$. Since the uniform limit of a sequence in $\Omega$ is again a seminorm dominated by $\|\cdot\|_1$, this entails that $\Omega$ is closed.
\end{proof}
\begin{lemma}
For every $n$ dimensional (real) normed space $X$, there exists a norm $\||\cdot\||$ on $\R^n$ satisfying
\begin{equation}
\|\cdot\|_\infty\leq \||\cdot|\|\leq\|\cdot\|_1,
\end{equation}
and there is a linear isometry $T: X\rightarrow (\R^n,\||\cdot|\|).$
\end{lemma}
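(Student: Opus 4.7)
The plan is to realize $X$ as $(\mathbb{R}^n,\||\cdot\||)$ via coordinates with respect to a carefully chosen basis, namely an \emph{Auerbach basis}. Recall that every $n$-dimensional normed space admits a basis $(e_1,\dots,e_n)\subset X$ with $\|e_i\|=1$ and biorthogonal functionals $(e_i^*)\subset X^*$ satisfying $e_i^*(e_j)=\delta_{ij}$ and $\|e_i^*\|=1$. (The standard construction is to maximize the determinant $\det[e_1,\dots,e_n]$ over choices of $e_i\in B_X$ in any fixed linear realization of $X$; the maximizer automatically produces the required biorthogonal system, since at a maximum each column of the matrix is dual to the corresponding normalized cofactor.)

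Given such an Auerbach basis, define $T:X\to\mathbb{R}^n$ by
\begin{equation*}
T\Big(\sum_{i=1}^n\alpha_i e_i\Big)=(\alpha_1,\alpha_2,\dots,\alpha_n),
\end{equation*}
and put $\||(\alpha_1,\dots,\alpha_n)\||:=\|\sum_i\alpha_i e_i\|_X$. Then $\||\cdot\||$ is visibly a norm on $\mathbb{R}^n$ (being the pullback of $\|\cdot\|_X$ under the linear bijection $T^{-1}$), and $T$ is a linear isometry from $X$ onto $(\mathbb{R}^n,\||\cdot\||)$ by construction.

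It remains to verify the sandwich $\|\cdot\|_\infty\leq\||\cdot\||\leq\|\cdot\|_1$. For the upper bound, for any $x=\sum_i\alpha_i e_i$, the triangle inequality gives
\begin{equation*}
\||(\alpha_1,\dots,\alpha_n)\||=\Big\|\sum_{i=1}^n\alpha_i e_i\Big\|_X\leq\sum_{i=1}^n|\alpha_i|\|e_i\|_X=\|(\alpha_1,\dots,\alpha_n)\|_1.
\end{equation*}
For the lower bound, applying the unit-norm biorthogonal functional $e_i^*$ yields $|\alpha_i|=|e_i^*(x)|\leq\|e_i^*\|\,\|x\|_X=\||(\alpha_1,\dots,\alpha_n)\||$ for each $i$, so $\|(\alpha_1,\dots,\alpha_n)\|_\infty\leq\||(\alpha_1,\dots,\alpha_n)\||$.

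There is no real obstacle here beyond knowing the Auerbach lemma; if the authors prefer a self-contained proof, the compactness argument (maximizing the determinant of a matrix of unit vectors over $B_X^n$) can be inserted, but otherwise a one-line citation suffices. The whole argument is completed in a few lines once the basis is produced.
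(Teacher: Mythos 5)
Your proof is correct and follows essentially the same route as the paper's: both invoke the Auerbach lemma to obtain a biorthogonal system of unit vectors and functionals, define $\||\cdot\||$ as the pullback of $\|\cdot\|_X$ under the coordinate map, and obtain the two inequalities from the triangle inequality and from applying the unit-norm biorthogonal functionals. No substantive difference.
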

\begin{proof}
Since $\dim (X)=n$, by the Auerbach theorem (see, for instance, \cite[Prop.1.c.3]{lin}) there exist $(x_j)_{j=1}^n\subset S_X$, $(x^*_i)_{i=1}^n\subset S_{X^*}$ such that
\begin{equation}
\langle x^*_i,x_j\rangle=\delta_{ij}\; (=1,\;{\rm if\;}i=j;\;=0,\;{\rm if}\;i\neq j).
\end{equation}
Note that $(x_j)_{j=1}^n$ is a basis of $X$. For every $x\in X$, there is a unique $a=(a_j)_{j=1}^n\in\R^n$ such that $x=\sum_{j=1}^na_jx_j$. This and (4.3) imply
$|a_j|\leq  1,\;j=1,2,\cdots,n.$ Let
\begin{equation}\||a\||=\|x\|,\;\;{\rm and}\;\;T(x)=a,\; \;{\rm for\;} x=\sum_{j=1}^na_jx_j\in X.\end{equation}
Then $T: X\rightarrow(\R^n,\||\cdot\||)$ is a linear isometry.

Given $a=(a_j)_{j=1}^n\in\R^n$, and $x=\sum_{j=1}^na_jx_j$,  since $$\|a\|_1=\sum_{j=1}^n|a_j|\|x_j\|\geq\|x\|=\|\sum_{j=1}^na_jx_j\|\geq\max_{1\leq i\leq n}\{\langle \pm x^*_i, \sum_{j=1}^na_jx_j\rangle\}=\|a\|_\infty,$$
(4.2) follows.
\end{proof}
\begin{lemma}
Let $p_1, p_2\in C(K)$ be two norms on $\R^n$ with $p_1\geq\|\cdot\|_\infty$, and let $\eps>0$. If $\|p_1-p_2\|_{C(K)}\leq\eps$, i.e.
\begin{equation}
\big|p_1(x)-p_2(x)\big|\leq\eps,\;\;\forall \;x\in K,\end{equation}
then \begin{equation}(1-\eps)p_1(x)\leq p_2(x)\leq(1+\eps)p_1(x),\;\forall \;x\in\R^n.\end{equation}
\end{lemma}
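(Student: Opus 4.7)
The plan is a direct normalization argument; the hypothesis is stated on the unit ball $K$ of $\ell^n_\infty$, and we extend it to all of $\mathbb R^n$ using positive homogeneity of norms together with the assumption $p_1\geq\|\cdot\|_\infty$.

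First I would dispose of $x=0$: both $p_1$ and $p_2$ are norms, so $p_1(0)=p_2(0)=0$ and (4.6) is trivial. For nonzero $x\in\mathbb R^n$, the hypothesis $p_1\geq\|\cdot\|_\infty$ guarantees $p_1(x)>0$; set
\begin{equation}
y=\frac{x}{p_1(x)}.
\end{equation}
Since $\|y\|_\infty=\|x\|_\infty/p_1(x)\leq 1$, we have $y\in K$, so (4.5) applies at $y$, yielding $|p_1(y)-p_2(y)|\leq\varepsilon$.

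Next I would use positive homogeneity of $p_1$ and $p_2$ to evaluate these two quantities: $p_1(y)=p_1(x)/p_1(x)=1$ and $p_2(y)=p_2(x)/p_1(x)$. Plugging in gives
\begin{equation}
\Bigl|\,1-\frac{p_2(x)}{p_1(x)}\,\Bigr|\leq\varepsilon,
\end{equation}
and multiplying through by $p_1(x)>0$ produces
\begin{equation}
(1-\varepsilon)p_1(x)\leq p_2(x)\leq(1+\varepsilon)p_1(x),
\end{equation}
which is precisely (4.6). Since $x\neq 0$ was arbitrary, this completes the proof.

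There is no real obstacle here; the only subtlety is noticing that the assumption $p_1\geq\|\cdot\|_\infty$ is used in exactly one place, namely to ensure that the rescaled vector $x/p_1(x)$ lies in the $\ell^n_\infty$ unit ball $K$ on which the closeness hypothesis (4.5) is given. Without that dominance, one would only know that $y$ lies in the $p_1$-unit ball, which need not be contained in $K$.
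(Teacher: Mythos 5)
Your proof is correct and follows essentially the same route as the paper: the paper observes that the dominance $p_1\geq\|\cdot\|_\infty$ forces the $p_1$-unit sphere $S(p_1)$ into $K$, applies the closeness hypothesis there, and extends by homogeneity, which is exactly your normalization $y=x/p_1(x)$ written in set form. No issues.
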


\begin{proof}
It follows from (4.5) that
\begin{equation}p_1(x)-\eps\leq p_2(x)\leq p_1(x)+\eps,\;\;\forall \;x\in K.\end{equation}
Note that $S({p_1})\equiv\{x\in\R^n: p_1(x)=1\}\subset K$. Then it follows from (4.7) that
 \begin{equation}(1-\eps)p_1(x)\leq p_2(x)\leq(1+\eps)p_1(x),\;\forall \;x\in S(p_1).\end{equation}
 Consequently, (4.6) holds.
\end{proof}
  Recall that every $n$-dimensional convex polyhedron (or, $n$-DCP, for short) $H$ of a Banach space $X$ is an $(n,m)$-polyhedron for some integer $m\geq0$, i.e. $H$ has the following form: $H={\rm co}(x_0,x_1,\cdots,x_n, u_1,\cdots,u_m)$,  with $${\rm vert}(H)=(x_0,x_1,\cdots,x_n, u_1,\cdots,u_m),$$ and satisfying that $(x_0,x_1,\cdots,x_n)$ is a maximal affinely independent subset of ${\rm vert}(H)$. 

 Given a nonempty subset $A$ of $X$, and for each pair of integers $m,n\geq0$, we denote by
\begin{equation}
\mathcal H_{n,m}(A)=\{H\subset X \;{is\;an\;}(n,m)\text{-polyhedron and with } {\rm vert}(H)\subset A\},
\end{equation}
\begin{equation}
\mathcal H_{n}(A)=\{H\subset X \;{is\;an\;}n\text{-DCP and with } {\rm vert}(H)\subset A\},
\end{equation}
and by
\begin{equation}
\mathcal H(A)=\{H\subset X \;{is\;a\;}\text{convex polyhedron and with } {\rm vert}(H)\subset A\}.
\end{equation}
\begin{lemma}
With notations as above, for every subset  $A$ of a Banach space $X$, we have
\begin{equation}
\mathcal H_{n}(A)=\bigcup_{m=0}^\infty\mathcal H_{n,m}(A),\;\;{\rm and\;\;} \mathcal H(A)=\bigcup_{n=0}^\infty\mathcal H_{n}(A).
\end{equation}
\end{lemma}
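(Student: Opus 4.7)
The statement to prove is a pair of set-theoretic identities that essentially unpack the definitions introduced in Definition~2.2. My plan is to verify each equality by a double inclusion argument, using the basic fact that the affine hull of any finite set has a well-defined (finite) dimension, together with the combinatorial observation that an $n$-dimensional convex polyhedron must carry at least $n+1$ affinely independent vertices.

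First I would handle the equality $\mathcal H_n(A)=\bigcup_{m=0}^\infty \mathcal H_{n,m}(A)$. The inclusion $\bigcup_m \mathcal H_{n,m}(A)\subset\mathcal H_n(A)$ is immediate: by Definition~2.2(iii) an $(n,m)$-polyhedron is by construction an $n$-dimensional convex polyhedron, and the vertex-in-$A$ condition is preserved. For the reverse inclusion, let $H\in\mathcal H_n(A)$. By Definition~2.2(ii) we have $\dim[\mathrm{aff}(H)]=n$, and since $H$ is a convex polyhedron its vertex set $\mathrm{vert}(H)=(x_0,x_1,\ldots,x_{k})$ is finite and lies in $A$. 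Because $H=\mathrm{co}(\mathrm{vert}(H))$ and the affine hull of a convex set equals the affine hull of its extreme points, $\mathrm{aff}(\mathrm{vert}(H))$ is $n$-dimensional. Hence $\mathrm{vert}(H)$ contains an affinely independent subset of size $n+1$, and no larger affinely independent subset. After relabeling one may assume $(x_0,\ldots,x_n)$ is a maximal affinely independent subset of $\mathrm{vert}(H)$; setting $m=k-n\geq 0$ puts $H$ in $\mathcal H_{n,m}(A)$.

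Next I would verify $\mathcal H(A)=\bigcup_{n=0}^\infty \mathcal H_n(A)$. Again the inclusion $\supset$ is trivial from the definitions. For $\subset$, take $H\in\mathcal H(A)$. By Definition~2.2(i) $H=\mathrm{co}(z_1,\ldots,z_k)$ for some $k\in\N$ and vertices in $A$, so $\mathrm{aff}(H)$ is a finite-dimensional affine subspace. Let $n=\dim[\mathrm{aff}(H)]$, which is a well-defined non-negative integer bounded by $k-1$. Then $H\in\mathcal H_n(A)$ by Definition~2.2(ii).

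The argument is essentially a definition check, and I do not expect a genuine obstacle; the only point that requires minor care is ensuring that the maximal affinely independent subset of $\mathrm{vert}(H)$ genuinely has cardinality $n+1$ and can be taken as the first $n+1$ listed vertices, which is exactly the content of the standard fact that $\dim[\mathrm{aff}(F)]+1$ equals the size of a maximal affinely independent subset of a finite set $F$. Once this is stated, both equalities drop out in one line each.
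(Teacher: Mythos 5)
Your proof is correct: it is exactly the routine double-inclusion verification from Definition~2.2, using that a maximal affinely independent subset of the vertex set of an $n$-dimensional convex polyhedron has $n+1$ elements. The paper states this lemma without proof, treating it as immediate from the definitions, so your write-up simply makes explicit the definition check the paper leaves to the reader.
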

Given two nonnegative integers $m,n$ and $r>0$, let $\mathcal H_{n,m}\equiv\mathcal H_{n,m}(rB_{\ell^n_\infty}),$ i.e. the set of all $(n,m)$-polyhedrons $H$ contained in $\ell^n_\infty\equiv(\R^n,\|\cdot\|_\infty)$ of the form $$H={\rm co}(x_0,x_1,\cdots,x_n, u_1,\cdots,u_m)$$ with its vertexes $${\rm vert}(H)=(x_0,x_1,\cdots,x_n, u_1,\cdots,u_m)\subset rB_{\ell^n_\infty}\equiv\{rx: x\in B_{\ell^n_\infty}\}$$ such that $(x_0,x_1,\cdots,x_n)$ is a maximal affinely independent subset of ${\rm vert}(H)$. 
Since $d_\mathfrak{H}$-convergence of a sequence $({\rm vert}(H_j))$ for $(H_j)\subset\mathcal  H_{n,m}$ is equivalent to the convergence of the corresponding vertex vector sequence $$\{(x_{0,j},x_{1,j}, \cdots, x_{n,j},u_{1,j},\cdots,u_{m,j})\}\;{\rm in}\; \ell_\infty^{n(n+1+m)}$$ within some permutations (Lemma 2.4),  the following result follows easily.
\begin{lemma} For each pair $m,n$ of nonnegative integers,
 $({\rm vert}(\mathcal H_{n,m}), d_\mathfrak{H})$ is relatively compact in $(\mathscr B(\ell_\infty^n), d_\mathfrak{H})$, where ${\rm vert}(\mathcal H_{n,m})=\{{\rm vert}(H): H\in\mathcal H_{n,m}\}$.
\end{lemma}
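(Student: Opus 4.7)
The plan is to reduce the claim to sequential compactness of a finite Cartesian power of $rB_{\ell^n_\infty}$, and then transfer coordinatewise convergence of vertex tuples into Hausdorff convergence of vertex sets.

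First I would use that $\ell^n_\infty$ is finite-dimensional, so $rB_{\ell^n_\infty}$ is norm-compact and hence the product $(rB_{\ell^n_\infty})^{n+1+m}$ is compact in its product topology. Each $H \in \mathcal H_{n,m}$ has vertex set of the form ${\rm vert}(H) = (x_0,\ldots,x_n,u_1,\ldots,u_m) \subset rB_{\ell^n_\infty}$, which after fixing an ordering is a point of that compact product. Given an arbitrary sequence $(H_j) \subset \mathcal H_{n,m}$ with vertex tuples $(x_{0,j},\ldots,x_{n,j},u_{1,j},\ldots,u_{m,j})$, I would extract by sequential compactness a subsequence along which every coordinate converges in $\|\cdot\|_\infty$, producing a limit tuple $(x_0,\ldots,x_n,u_1,\ldots,u_m)$ and an associated finite set $V \subset rB_{\ell^n_\infty}$, which trivially lies in $\mathscr B(\ell^n_\infty)$.

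Next I would verify that coordinatewise convergence forces $d_\mathfrak{H}$-convergence of the corresponding vertex sets to $V$. This is the easy, unconditional half of the argument in Lemma 2.4: for any $\eps>0$, once $\|x_{i,j}-x_i\|<\eps$ and $\|u_{l,j}-u_l\|<\eps$ for every $i,l$, the inclusions ${\rm vert}(H_j)\subset V+\eps B_{\ell^n_\infty}$ and $V\subset {\rm vert}(H_j)+\eps B_{\ell^n_\infty}$ both hold, so $d_\mathfrak{H}({\rm vert}(H_j),V)\to 0$ along the chosen subsequence, establishing relative compactness.

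The point worth flagging — and the reason one cannot simply quote Lemma 2.4 verbatim — is that $V$ need not itself be the vertex set of any $(n,m)$-polyhedron: distinct vertices may collapse in the limit, or the maximal affinely independent selection $(x_0,\ldots,x_n)$ may lose its affine independence. The hypothesis $2\eps<\min\{\delta_1,\delta_2\}$ in Lemma 2.4 precisely excludes such degeneracies, so only the forward implication of its argument is available unconditionally here. This is harmless for the stated conclusion, since relative compactness inside $\mathscr B(\ell^n_\infty)$ permits the limit to leave ${\rm vert}(\mathcal H_{n,m})$; it would, however, prevent ${\rm vert}(\mathcal H_{n,m})$ from being closed in $\mathscr B(\ell^n_\infty)$ in general, which is why the lemma is phrased in terms of relative compactness rather than compactness.
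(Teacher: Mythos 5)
Your proof is correct and follows essentially the same route the paper takes: the paper states the lemma as following ``easily'' from the identification of vertex sets with vertex tuples in the compact set $(rB_{\ell^n_\infty})^{n+1+m}$ together with Lemma 2.4, which is exactly your reduction. Your added remark that only the unconditional (sufficiency) direction of Lemma 2.4 is available, and that limits may degenerate so that only \emph{relative} compactness can be claimed, is a correct and worthwhile clarification that the paper leaves implicit.
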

 Assume that $A$ is a nonempty bounded subset of a Banach space $X$, and that $n,m$ are two nonnegative integers. For each $$H={\rm co}(x_0,x_1,\cdots,x_n,u_1,\cdots,u_m)\in\mathcal H_{n,m}(A),$$ let $X_H={\rm span}(H)$. Then $X_H={\rm span}\{x_0,x_1,\cdots,x_n\}$ is either $n$, or, $n+1$ dimensional. Next, put
 $$\mathcal H_{n,m,0}(A)=\{H\in\mathcal H_{n,m}(A): \;{\rm dim}(X_H)=n\}$$ and
 $$\mathcal H_{n,m,1}(A)=\{H\in\mathcal H_{n,m}(A): \;{\rm dim}(X_H)=n+1\}.$$
 For each $H\in\mathcal H_{n,m,0}(A)$ (resp. $\mathcal H_{n,m,1}(A)$), let the linear isometry $T: X_H\rightarrow (\R^n,\||\cdot\||)$ (resp.  $X_H\rightarrow (\R^{n+1},\||\cdot\||)$) and the norm $\||\cdot\||$ with respect to $X_H$ be defined as Lemma 4.2. We write the linear isometry $T$ (resp. the norm $\||\cdot\||$) as $T_H$ (resp. $|\cdot|_H$) for distinction. Keep these notations in mind. Then we have the following result.
\begin{lemma}
Suppose that $A$ is a bounded subset of a Banach space $X$. Let  $r=\sup_{a\in A}\|a\|$. Then for any fixed $n,m\in\N$,
$$\mathcal T_{n,m,0}(A)\equiv\{{\rm vert}(T_H(H))\subset\ell^n_\infty: H\in\mathcal H_{n,m,0}(A)\}$$
$$({\rm resp.\;}\;\mathcal T_{n,m,1}(A)\equiv\{{\rm vert}(T_H(H))\subset\ell^{n+1}_\infty: H\in\mathcal H_{n,m,1}(A)\})$$
 is a bounded subset of $(\mathscr{B}(\ell^n_\infty), d_\mathfrak{H})$ (resp. $(\mathscr{B}(\ell^{n+1}_\infty), d_\mathfrak{H})$) and bounded by $2r$. Consequently, by Lemma 4.5, $\mathcal T_{n,m}(A)=\mathcal T_{n,m,0}(A)\cup \mathcal T_{n,m,1}(A)$ is relatively compact.
\end{lemma}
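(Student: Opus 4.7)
The plan is to push every $H\in\mathcal H_{n,m}(A)$ down via the isometry $T_H$ to a polyhedron in $\ell^k_\infty$ (where $k\in\{n,n+1\}$ equals $\dim X_H$) whose vertex set sits inside the closed ball $rB_{\ell^k_\infty}$, and then invoke Lemma 4.5.

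First I would fix $H\in\mathcal H_{n,m,0}(A)$ together with an arbitrary vertex $v\in{\rm vert}(H)\subset A$. By the definition of $r$ one has $\|v\|\leq r$. Since $T_H:X_H\to(\R^n,|\cdot|_H)$ is a linear isometry, $|T_Hv|_H=\|v\|\leq r$. Lemma 4.2 supplies the sandwich $\|\cdot\|_\infty\leq|\cdot|_H\leq\|\cdot\|_1$, so $\|T_Hv\|_\infty\leq r$. Hence ${\rm vert}(T_H(H))\subset rB_{\ell^n_\infty}$, and the identical argument with $\R^{n+1}$ in place of $\R^n$ handles $H\in\mathcal H_{n,m,1}(A)$.

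From this containment I would then extract the $2r$ bound on diameters: any two $V_1,V_2\in\mathcal T_{n,m,0}(A)$ lie in $rB_{\ell^n_\infty}$, so $V_1\subset V_2+2rB_{\ell^n_\infty}$ and symmetrically, giving $d_\mathfrak{H}(V_1,V_2)\leq 2r$. The verification for $\mathcal T_{n,m,1}(A)$ inside $\ell^{n+1}_\infty$ is identical.

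For the relative compactness of $\mathcal T_{n,m}(A)$, I would observe that $T_H(H)$ is itself an $(n,m)$-polyhedron in the ambient $\ell^k_\infty$ whose vertex set sits in $rB_{\ell^k_\infty}$, i.e.\ $T_H(H)\in\mathcal H_{n,m}(rB_{\ell^k_\infty})$, which is precisely the set $\mathcal H_{n,m}$ in the sense of Lemma 4.5 (applied once in $\ell^n_\infty$ and once in $\ell^{n+1}_\infty$). Thus $\mathcal T_{n,m,\epsilon}(A)\subset{\rm vert}(\mathcal H_{n,m}(rB_{\ell^k_\infty}))$ for $\epsilon\in\{0,1\}$, and Lemma 4.5 yields relative compactness of each piece. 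Embedding $\ell^n_\infty\hookrightarrow\ell^{n+1}_\infty$ by appending a zero coordinate places the union $\mathcal T_{n,m}(A)$ into a common ambient $\mathscr B(\ell^{n+1}_\infty)$, where it is a finite union of relatively compact sets, hence relatively compact. I do not foresee any real obstacle: the whole argument is essentially bookkeeping around Lemmas 4.2 and 4.5, and the only mild annoyance will be tracking whether ${\rm span}(H)$ equals ${\rm aff}(H)$ or is one dimension larger, which is precisely why the splitting into $\mathcal T_{n,m,0}(A)$ and $\mathcal T_{n,m,1}(A)$ is built into the statement.
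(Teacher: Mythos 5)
Your proof is correct and follows essentially the same route as the paper: both bound $\|T_Hv\|_\infty$ by $r$ via the isometry $T_H$ together with the inequality $\|\cdot\|_\infty\leq|\cdot|_H$ from Lemma 4.2, deduce the $2r$ bound on Hausdorff distances, and then invoke Lemma 4.5 for relative compactness. Your explicit handling of the union of the two pieces (living in $\ell^n_\infty$ and $\ell^{n+1}_\infty$) is a minor point the paper leaves implicit, and is fine.
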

\begin{proof}
Let $r=\sup_{a\in A}\|a\|$. Then $A\subset rB_X$. For each fixed $H\in\mathcal H_{n,m,0}(A)$ (resp. $H\in\mathcal H_{n,m,1}(A)$), since $T_H: X_H\rightarrow (\R^n,|\cdot|_H)$ (resp. $X_H\rightarrow (\R^{n+1},|\cdot|_H)$) is a linear isometry,
$$\sup_{a\in T_H(H)}\|a\|_\infty\leq\sup_{a\in T_H(H)}|a|_H=\sup_{a\in T_H(H)}\|T^{-1}_H(a)\|=\sup_{h\in H }\|h\|\leq r.$$
Therefore, $$\bigcup\big\{T_H(H): H\in\mathcal H_{n,m,0}(A)\big\}\subset\ell^n_\infty$$
$$({\rm resp.\;}\bigcup\big\{T_H(H): H\in\mathcal H_{n,m,1}(A)\big\}\subset\ell^{n+1}_\infty)$$ is bounded by $r$. Consequently, $$\mathcal T_{n,m,0}(A)=\{{\rm vert}(T_H(H)): H\in\mathcal H_{n,m,0}(A)\}\subset \mathscr{B}({\ell_\infty^{n}})$$
$$({\rm resp. \;}\mathcal T_{n,m,1}(A)=\{{\rm vert}(T_H(H)): H\in\mathcal H_{n,m,1}(A)\}\subset\mathscr{B}({\ell_\infty^{n+1}}))$$
is bounded by $2r$.
\end{proof}

 Now, we use $K_n$ to denote the closed unit ball of the space $\ell_\infty^n$,  $\Omega_n\subset C(K_n)$ is the set of all seminorms on $\R^n$ dominated by $\|\cdot\|_1$ (defined as Lemma 4.1), and for a subset $A\subset X$, let $\mathcal T_{n,m,0}(A)$ and  $\mathcal T_{n,m,1}(A)$ be defined as Lemma 4.6.
 \begin{lemma}
 Suppose that $A$ is a bounded subset of a Banach space $X$. Then  the products $\mathcal P_{n,m,0}(A)\equiv\mathcal T_{n,m,0}(A)\times\Omega_n$ and $\mathcal P_{n,m,1}(A)\equiv\mathcal T_{n,m,1}(A)\times\Omega_{n+1}$ are again relatively compact.
 \end{lemma}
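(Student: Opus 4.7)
The plan is to reduce Lemma 4.7 to the two compactness facts already established, namely Lemma 4.1 (convex compactness of $\Omega_n$ in $C(K_n)$) and Lemma 4.6 (relative compactness of $\mathcal{T}_{n,m,0}(A)$ in $(\mathscr{B}(\ell_\infty^n),d_{\mathfrak{H}})$ and of $\mathcal{T}_{n,m,1}(A)$ in $(\mathscr{B}(\ell_\infty^{n+1}),d_{\mathfrak{H}})$). Once these are recalled, the conclusion is essentially a product-of-compact-sets argument in the metric category, so the whole verification should be short.

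First I would endow $\mathcal{P}_{n,m,0}(A)$ with the natural product metric coming from $d_{\mathfrak{H}}$ on the first factor and the sup-norm $\|\cdot\|_\infty$ on $C(K_n)$ on the second factor, and similarly for $\mathcal{P}_{n,m,1}(A)$ (with $C(K_{n+1})$ in place of $C(K_n)$). Next I would invoke the elementary fact that in the product of two metric spaces one has
\begin{equation}
\overline{U\times V}=\overline{U}\times\overline{V},
\end{equation}
so that $U\times V$ is relatively compact whenever $U$ and $V$ are relatively compact in their respective factors. This reduces everything to compactness of the closures of the two factors.

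Finally I would assemble the pieces: by Lemma 4.6, $\overline{\mathcal{T}_{n,m,0}(A)}$ is compact in $(\mathscr{B}(\ell_\infty^n),d_{\mathfrak{H}})$ (and likewise for the second index $1$), while by Lemma 4.1, $\Omega_n\subset C(K_n)$ is already compact (and similarly $\Omega_{n+1}\subset C(K_{n+1})$). Hence
\begin{equation}
\overline{\mathcal{P}_{n,m,0}(A)}=\overline{\mathcal{T}_{n,m,0}(A)}\times\Omega_n
\end{equation}
is a product of two compact metric spaces and therefore compact, so $\mathcal{P}_{n,m,0}(A)$ is relatively compact; the same reasoning applied in dimension $n+1$ gives the statement for $\mathcal{P}_{n,m,1}(A)$.

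There is no substantial obstacle here: the only point needing a word of care is to make sure we are using the right metric on the product (so that relative compactness really does pass through the product), but this is standard and does not require any new estimate. All the geometric content — affine independence, vertex structure, the Auerbach-type normalization used to define $T_H$ and $|\cdot|_H$ — has already been absorbed into Lemmas 4.1, 4.5 and 4.6, so Lemma 4.7 should follow from them in a few lines.
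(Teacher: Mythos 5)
Your proposal is correct and follows essentially the same route as the paper: both invoke Lemma 4.1 for the compactness of $\Omega_n$ (and $\Omega_{n+1}$) and Lemma 4.6 for the relative compactness of $\mathcal T_{n,m,0}(A)$ and $\mathcal T_{n,m,1}(A)$, and then conclude via the standard fact that a product of relatively compact subsets is relatively compact in the product metric. Your extra remark about $\overline{U\times V}=\overline{U}\times\overline{V}$ just makes explicit a step the paper leaves implicit.
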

 \begin{proof}
   By Lemmas 4.1 and 4.6, $\Omega_n$, $\Omega_{n+1}$, $\mathcal T_{n,m,0}(A)$ and  $\mathcal T_{n,m,1}(A)$  are relatively compact. Therefore, the products $\mathcal P_{n,m,0}(A)\equiv\mathcal T_{n,m,0}(A)\times\Omega_n$ and $\mathcal P_{n,m,1}(A)\equiv\mathcal T_{n,m,1}(A)\times\Omega_{n+1}$ are again relatively compact.
 \end{proof}
\begin{lemma}
Assume that $A$ be a bounded subset of a Banach space $X$.
Given $U_0=({\rm vert}(T_0H_0),|\cdot|_0), U_
j=({\rm vert}(T_jH_j),|\cdot|_j)\in\mathcal P_{n,m,0}(A)$ (resp. $\mathcal P_{n,m,1}(A)$ ) $\;j=1,2,\cdots,$ if $U_j\rightarrow U_0$ in $\mathcal P_{n,m,0}(A)$ (resp. $\mathcal P_{n,m,1}(A)$),
then ${\rm vert}(H_0)$ is strongly (finitely) representable in $\bigcup_{j=1}^\infty{\rm vert}(H_j)$.
\end{lemma}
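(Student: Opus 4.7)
My plan is to extract from the convergence $U_j\to U_0$ --- which decomposes into (i) Hausdorff convergence $\mathrm{vert}(T_jH_j)\to\mathrm{vert}(T_0H_0)$ in $\ell^n_\infty$ (resp.\ $\ell^{n+1}_\infty$) and (ii) uniform convergence $|\cdot|_j\to|\cdot|_0$ on $K_n$ (resp.\ $K_{n+1}$) --- enough geometric control to verify, for each $\eps>0$ and each $(k,\ell)$-polyhedron $G$ with $\mathrm{vert}(G)\subset\mathrm{vert}(H_0)$, the two clauses of Definition~2.5(i): an $(k,\ell)$-polyhedron $G'\subset X$ with $\mathrm{vert}(G')\subset\bigcup_l\mathrm{vert}(H_l)$ and an affine isomorphism $\mathrm{aff}(G)\to\mathrm{aff}(G')$ fulfilling (2.9) and (2.10). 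Since $\mathrm{vert}(H_0)$ is finite, Definition~2.5(iii) then delivers strong representability.

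First I would normalize and align vertex labellings. Writing $\mathrm{vert}(H_j)=\{a_{0,j},\ldots,a_{n+m,j}\}$ with $(a_{0,j},\ldots,a_{n,j})$ a maximal affinely independent subset and $x_{i,j}:=T_j(a_{i,j})$, I apply Lemma~2.4 with $\delta$ equal to half the minimum pairwise distance in $\{x_{i,0}\}$ to obtain, for $j$ large, permutations $\pi_j$ of $\{0,\ldots,n+m\}$ with $\|x_{i,j}-x_{\pi_j(i),0}\|_\infty$ small; since the symmetric group is finite I pass to a subsequence and relabel so that $x_{i,j}\to x_{i,0}$ for each $i$. The uniform convergence of norms, combined with Lemma~4.3 (whose hypothesis $|\cdot|_0\geq\|\cdot\|_\infty$ is furnished by~(4.2)), then yields, for every $\eta>0$ and all large $j$, the multiplicative estimate $(1-\eta)|w|_0\leq|w|_j\leq(1+\eta)|w|_0$ valid on all of $\R^n$ (resp.\ $\R^{n+1}$).

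Second, given $\eps>0$ and a $(k,\ell)$-polyhedron $G$ with $\mathrm{vert}(G)=\{a_{i_0,0},\ldots,a_{i_{k+\ell},0}\}$ and maximal affinely independent subset $\{a_{i_0,0},\ldots,a_{i_k,0}\}$, I would propose $G':=\mathrm{co}(a_{i_0,j},\ldots,a_{i_{k+\ell},j})$ for large $j$; its vertex set is $\{a_{i_0,j},\ldots,a_{i_{k+\ell},j}\}\subset\mathrm{vert}(H_j)$ since subsets of extreme points remain extreme in smaller convex hulls. I define $\Phi_j:\mathrm{aff}(G)\to\mathrm{aff}(G')$ by $a_{i_r,0}\mapsto a_{i_r,j}$ for $r=0,\ldots,k$, extended affinely. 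Transferring via $T_0,T_j$ to $\R^n$, the linear part of $T_j\Phi_jT_0^{-1}$ converges to the identity on the finite-dimensional subspace $\mathrm{span}(G-G)$ in operator norm, and combining with the multiplicative norm estimate gives~(2.9). For~(2.10), $\Phi_j(a_{i_r,0})=a_{i_r,j}$ exactly for $r\leq k$; for $r>k$ the barycentric expansion $a_{i_r,0}=\sum_{s=0}^k\lambda_{s,r}a_{i_s,0}$ yields $T_j(\Phi_j(a_{i_r,0})-a_{i_r,j})=\sum_s\lambda_{s,r}(x_{i_s,j}-x_{i_s,0})-(x_{i_r,j}-x_{i_r,0})\to 0$ in $\ell^n_\infty$, hence in $X$ via $|\cdot|_j\leq n\|\cdot\|_\infty$.

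The hard part will be a subtle dimension-matching issue when $\ell\geq 1$: the ``extras'' $a_{i_r,j}$ for $r>k$ need not lie in $\mathrm{aff}(a_{i_0,j},\ldots,a_{i_k,j})$, so $\dim\mathrm{aff}(G')$ may strictly exceed $k$ for every finite $j$, in which case $G'$ fails to be a $(k,\ell)$-polyhedron of the required type and $\Phi_j$ ceases to be surjective onto $\mathrm{aff}(G')$. My plan for addressing this is to exploit that the deviation of each extra vertex from the $k$-dimensional affine span of the simplex part vanishes in the limit $j\to\infty$: one can either select the extra vertices of $G'$ from potentially different $\mathrm{vert}(H_{j'})$'s, using the density of $\bigcup_l\mathrm{vert}(H_l)$ near each $a_{i_r,0}$, or pass to a further subsequence on which the affine span stabilizes, absorbing any residual deviation into the $\eps$-tolerance of~(2.10). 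The case $\ell=0$ (simplexes) needs no such correction, as the construction above already yields a $(k,0)$-polyhedron directly.
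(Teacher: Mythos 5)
Your core construction is essentially the paper's own proof. The paper also splits the convergence $U_j\to U_0$ into Hausdorff convergence of the vertex images and uniform convergence of the norms $|\cdot|_j\to|\cdot|_0$, uses Lemma 2.4 to align vertices up to a permutation, builds the affine isomorphism $L_j$ on $\R^n$ (resp.\ on $R_j=T_j({\rm aff}(H_j))\subset\R^{n+1}$) by sending the simplex part of $U_j$ to that of $U_0$, and then transports everything back by $V_j=T_j^{-1}L_j^{-1}T_0$ (your $\Phi_j$), obtaining the distortion bound from Lemma 4.3 together with the norm convergence, and the Hausdorff estimate on vertex sets exactly as in your barycentric computation. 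Where you diverge is that you try to verify Definition 2.5 for \emph{every} $(k,\ell)$-polyhedron $G$ with ${\rm vert}(G)\subset{\rm vert}(H_0)$, whereas the paper only treats the single case $G=H_0$, mapping it onto $H_j$ for large $j$; that single case is all that Theorem 4.9 actually consumes, since a sub-polyhedron of $H_0$ with vertices in $A$ is itself the top polyhedron of some $\mathcal H_{k,\ell}(A)$ and is handled there by the corresponding family $\mathcal P_{k,\ell,i}(A)$.

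On the difficulty you flag: it is real for the literal reading of the conclusion, but neither of your proposed repairs closes it. A $(k,\ell)$-polyhedron $G'$ must have a $k$-dimensional affine hull and the map must be an affine isomorphism ${\rm aff}(G)\to{\rm aff}(G')$, so if the extra vertices $a_{i_r,j}$, $r>k$, leave ${\rm aff}(a_{i_0,j},\dots,a_{i_k,j})$ by any positive amount, the candidate $G'$ is simply not an admissible polyhedron, and no $\eps$-tolerance in (2.10) absorbs a dimension jump; choosing the extras from other $H_{j'}$ does not force them into that $k$-dimensional affine subspace either. If you want your stronger version, you would have to argue separately that for $j$ large the relevant sub-collections of vertices of $H_j$ keep the same affine-dependence pattern as those of $H_0$, which is false in general (affine dependence is not an open condition). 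The safe course is to do what the paper does: prove only the top-polyhedron approximation here and let the quantification over all pairs $(n,m)$ in Theorem 4.9 supply the rest.
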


\begin{proof}
We first emphasize that  $U_j\rightarrow U_0$ in $\mathcal P_{n,m,0}(A)$ (resp. $\mathcal P_{n,m,1}(A)$) is equivalent to that the vertex sequence $(U_j)$ of the $(n,m)$-polyhedrons $T_jH_j$ satisfies
 \begin{equation}
 U_j\rightarrow U_0 \;{\rm in\;} (\mathscr{B}(\ell^n_\infty),d_\mathfrak H)\;\;\;({\rm resp.\;\;}(\mathscr{B}(\ell^{n+1}_\infty),d_\mathfrak H)),
 \end{equation}
 and the sequence $(|\cdot|_j)$ of the norms $|\cdot|_j$ on $\R^n$ (resp. $\R^{n+1}$) satisfies
 \begin{equation}
 |\cdot|_j\rightarrow |\cdot|_0\;{\rm in\;} C(K_n)\;({\rm resp.\;} C(K_{n+1})).
 \end{equation}

Let $$H_j={\rm co}(h_{j,0}, h_{j,1},\cdots,h_{j,n},h_{j,n+1},\cdots,h_{j,n+m}),\;j=0,1,\cdots,$$ be $(n,m)$-polyhedrons satisfying that $${\rm vert}(H_j)=(h_{j,0}, h_{j,1},\cdots,h_{j,n},h_{j,n+1},\cdots,h_{j,n+m})\subset A$$ and that
$(h_{j,0}, h_{j,1},\cdots,h_{j,n})$ is a maximal affinely independent subset of ${\rm vert}(H_j)$. We denote by $$S(A,j)={\rm co}(h_{j,0}, h_{j,1},\cdots,h_{j,n}), j=0,1,\cdots,$$ the $n$-simplexes in $X$ with ${\rm vert}(S(A,j))=(h_{j,0}, h_{j,1},\cdots,h_{j,n})\subset A$.

Note that each $W_j\equiv T_jH_j$ is again an $(n,m)$-polyhedron of the form $$W_j={\rm co}(a_{j,0}, a_{j,1},\cdots,a_{j,n},a_{j,n+1},\cdots,a_{j,n+m}),\; j=0,1,\cdots,$$
satisfying that for each $j$, $(a_{j,0}, a_{j,1},\cdots,a_{j,n})$ is a maximal affinely independent subset of the whole vertexes $(a_{j,0}, a_{j,1},\cdots,a_{j,n},a_{j,n+1},\cdots,a_{j,n+m})=U_j$.
Then  \begin{equation}{\rm vert}(W_j)=U_j\rightarrow U_0\equiv{\rm vert}(W_0)\end{equation} in the Hausdorff metric $d_\mathfrak{H}$ is equivalent to that there exists a sequence of permutations $(\pi_j)$ of the set $(0,1,\cdots, n, n+1,\cdots,n+m)$ such that $$(a_{j,\pi_j(0)},a_{j,\pi_j(1)},\cdots, a_{j,\pi_j(n)},a_{j,\pi_j(n+1)},\cdots,a_{j,\pi_j(n+m)})$$ converges to $$(a_{0,0}, a_{0,1},\cdots,a_{0,n},a_{0,n+1},\cdots,a_{0,n+m})$$ in the  space $\ell_\infty^{n(n+1+m)}$ (resp. $\ell_\infty^{(n+1)(n+1+m)}$). Without loss of generality, we can assume that \begin{equation}(a_{j,0},a_{j,1},\cdots,a_{j,n},a_{j,n+1},\cdots, a_{j,n+m})\rightarrow(a_{0,0}, a_{0,1},\cdots,a_{0,n},a_{0,n+1},\cdots,a_{0,n+m}),\end{equation} as $j\rightarrow\infty$. For each fixed $j\geq0$, let \begin{equation}S_j={\rm co}(a_{j,0},a_{j,1},\cdots,a_{j,n}).\end{equation} \\

Case I. $U_0=({\rm vert}(T_0H_0),|\cdot|_0), U_
j=({\rm vert}(T_jH_j),|\cdot|_j)\in\mathcal P_{n,m,0}(A)$. In this situation, we know that ${\rm aff}(S_j)={\rm span}(S_j)=\R^n.$
We can define an affine isomorphism $L_j: \ell_\infty^n\rightarrow\ell_\infty^n$ satisfying
\begin{equation}L_j(a_{j,k})=a_{0,k},\; k=0,1,\cdots,n.\end{equation}
It follows from (4.16)-(4.18)
\begin{equation} L_j(S_j)=S_0,\;{\rm and\;} L_j\rightarrow id_{\ell^n_\infty}+v_0\end{equation}
for some vector $v_0\in\ell^n_\infty$ with respect to the norm of $\ell^n_\infty$ (hence, with respect to the equivalent norm $|\cdot|_0$)   as $j\rightarrow\infty$.
Therefore,
\begin{equation}
d_\mathfrak{H}\Big(L_j(U_j), U_0\Big)\rightarrow 0,\; \;{\rm in}\; \Big(\mathscr{B}(\ell_\infty^n),d_\mathfrak{H}\Big),\;{\rm as\;}\;j\rightarrow\infty.
\end{equation}
 Consequently,
\begin{equation}
d_\mathfrak{H}\Big(L_j^{-1}(U_0),U_j\Big)\rightarrow 0,\;\;{\rm in}\; \Big(\mathscr{B}(\R^n,|\cdot|_j),d_\mathfrak{H}\Big),{\rm as\;}j\rightarrow\infty.
\end{equation}
Since $T_j: X_j\equiv{\rm span}(S(A,j))\rightarrow (\R^n,|\cdot|_j)$ are linear isometries, we obtain  further that
\begin{equation}\begin{array}{cc}
              d_\mathfrak{H}\Big(L_j^{-1}(U_0),U_j\Big)=d_\mathfrak{H}\Big(T_j^{-1}L_j^{-1}U_0,T_j^{-1}U_j\Big)\;\;\;\;\;\;\;\;\;\;\;\;\;\;\;\;\\
                =d_\mathfrak{H}\Big((T_j^{-1}L_j^{-1})U_0,{\rm vert}(H_j)\Big)\rightarrow 0,\;{\rm as\;}j\rightarrow\infty.

                  \end{array}
\end{equation}
Lemmas 4.2, 4.3 and  $|\cdot|_j\rightarrow|\cdot|_0$ in $\Omega_n\subset C(K_n)$ and (4.19) imply that for every $\eps>0$, there exists $j_0\in\N$ such that for all $a,b\in\R^n$(whenever $j\geq j_0$)
\begin{equation}
(1-\eps)|a-b|_j\leq|L_ja-L_jb|_0\leq (1+\eps)|a-b|_j,
\end{equation}
and
\begin{equation}
(1-\eps)|L^{-1}_ja-L^{-1}_jb|_j\leq|a-b|_0\leq (1+\eps)|L^{-1}_ja-L^{-1}_jb|_j.
\end{equation}
Let $X_j={\rm span}(S(A,j))(={\rm aff}(H_j)\subset X)$.
Since $L_j(S_j)=S_0\subset T_0H_0$, and since $$T_j: X_j\rightarrow (\R^n,|\cdot|_j),\;j=0,1,\cdots,$$ are  linear isometries, it follows from (4.24) that for all $j\geq j_0$
\begin{eqnarray} &(1-\eps)\|T_j^{-1}L^{-1}_ja-T_j^{-1}L^{-1}_jb\|\leq\|T_0^{-1}a-T_0^{-1}b\|\;\;\;\;\;\;\;\;\;\;\\\nonumber
&\leq (1+\eps)\|T_j^{-1}L^{-1}_ja-T_j^{-1}L^{-1}_jb\|.\;\;\;\;\;\\\nonumber
\end{eqnarray}
Let $V_j=T_j^{-1}L^{-1}_jT_0$.  Then $$V_j: X_0\equiv{\rm span}(S(A,0))\rightarrow X_j\equiv{\rm span}(S(A,j))$$ is an isomorphism, which satisfies \begin{equation}V_j(S(A,0))=S(A,j).\end{equation}
We write $T_0^{-1}a=x, T_0^{-1}b=y.$ It follows from (4.25) that for all $x,y\in{\rm span}(S(A,0))={\rm aff}(H_0)$,
\begin{equation}(1-\eps)\|V_jx-V_jy\|\leq\|x-y\|\leq(1+\eps)\|V_jx-V_jy\|.\end{equation}
On the other hand, note that ${\rm vert}(T_0H_0)=U_0$, and that $${\rm vert}(V_j(H_0))={\rm vert}(T_j^{-1}L^{-1}_jT_0(H_0))=T_j^{-1}L^{-1}_j(U_0).$$ It follows from (4.22) that
\begin{equation}
d_\mathfrak{H}\Big({\rm vert}(V_jH_0),{\rm vert}(H_j)\Big)\rightarrow 0,\;{\rm as\;}j\rightarrow\infty.
\end{equation}

Thus, it follows from (4.26), (4.27) and (4.28) that ${\rm vert}(H_0)$ is strongly (finitely) representable in $\bigcup_{j=1}^\infty{\rm vert}(H_j)$.\\

Case II.  $U_0=({\rm vert}(T_0H_0),|\cdot|_0), U_
j=({\rm vert}(T_jH_j),|\cdot|_j)\in\mathcal P_{n,m,1}(A)$. In this case, for each integer $j\geq0$, $X_j\equiv{\rm span}(H_j)$ is an $n+1$ dimensional subspace, and $A_j\equiv{\rm aff}(H_j)$ is an $n$ dimensional (proper) affine subspace of $X_j$, and $T_j: X_j\rightarrow (\R^{n+1},|\cdot|_j)$ is a linear isometry.
We use $T^r_j$ to denote $T_j|_{A_j}$, the restriction of $T_j$ restricted to $A_j$, and let  $R_j=T_j(A_j)\subset (\R^{n+1},|\cdot|_j)$, the range of $A_j$ under $T_j$. Therefore, $T^r_j: A_j\rightarrow R_j$ is a surjective affine isometry.

It follows from (4.13), (4.14), (4.16) and (4.17) that \begin{equation}a_{j,k}\rightarrow a_{0,k},\;{\rm  in\;} \ell^{n+1}_\infty\;{\rm for} \;k=0,1,\cdots,n+m,\;{\rm as \;} j\rightarrow\infty,\end{equation}
\begin{equation}
S_j={\rm co}(a_{j,0},a_{j,1},\cdots,a_{j,n})\;(j=0,1,\cdots)\; {\rm are\;}n\text{-}{\rm simplexes \;in\;}\ell_\infty^{n+1}.
\end{equation}
\begin{equation}
{\rm vert}(T_jH_j)=U_j=(a_{j,0},a_{j,1},\cdots,a_{j,n},\cdots, a_{j,n+m}),\;j=0,1,\cdots.
\end{equation}
\begin{equation}
|\cdot|_j\rightarrow |\cdot|_0,\;{\rm in\;}C(K_{n+1})\;{\rm as\;}j\rightarrow\infty,
\end{equation}
and
\begin{equation}
d_\mathfrak H\Big(U_j,U_0\Big)\rightarrow 0,\;{\rm in\;}\mathscr{B}(\ell_\infty^{n+1}),\;{\;as\;}j\rightarrow\infty.
\end{equation}

For each $j\in\N$, we define an affine isomorphism $L_j: R_j\rightarrow R_0$ for $\sum_{k=0}^n\alpha_ka_{j,k}\in R_j$ (where $\alpha_k\in\R, \;k=0,1,\cdots,n,$ with $\sum_{k=0}^n\alpha_k=1$) by
\begin{equation}
L_ja_{j,k}=a_{0,k}, \;\;L_j(\sum_{k=0}^n\alpha_ka_{j,k})=\sum_{k=0}^n\alpha_kL_ja_{j,k},
\end{equation}
Therefore,
\begin{equation}
L_j(S_j)=S_0,\;j=,1,2,\cdots,
\end{equation}
where $S_j$ are defined as (4.17).
It follows from (4.29), (4.32) and (4.34) that
\begin{equation}
\|L_j\|,\;\|L_j^{-1}\|\rightarrow 1,\;\;{\rm as\;}j\rightarrow\infty.
\end{equation}
(4.29), (4.31), (4.33) and (4.36) entail that
\begin{equation}
L_ja_{j,k}\rightarrow a_{0,k},\;{\rm for\;}k=1,\cdots,n,n+1\cdots,n+m,\;{\rm as\;}j\rightarrow\infty.
\end{equation}
Consequently,
\begin{equation}
d_\mathfrak{H}\Big(L_j(U_j),U_0\Big)\rightarrow0,\;{\rm in\;}\mathscr{B}(\R^{n+1},|\cdot|_0),\;{\rm as\;}j\rightarrow\infty.
\end{equation}
Equivalently,
\begin{equation}
d_\mathfrak{H}\Big({\rm vert}((L_jT_j)(H_j)),{\rm vert}(T_0(H_0))\Big)\rightarrow0,\;\;{\rm as\;}j\rightarrow\infty.
\end{equation}
Since $T_0: {\rm aff}(H_0)\rightarrow{\rm aff}(U_0)={\rm aff}(S_0)$ is an affine isometry, it follows from (4.39) that
\begin{equation}
d_\mathfrak{H}\Big({\rm vert}((T_0^{-1}L_jT_j)(H_j)),{\rm vert}(H_0)\Big)\rightarrow0,\;{\rm in\;}\mathscr{B}(X),\;{\rm as\;}j\rightarrow\infty.
\end{equation}
and this is equivalent to that
\begin{equation}
d_\mathfrak{H}\Big({\rm vert}(H_j),{\rm vert}((T_j^{-1}L_j^{-1}T_0)(H_0)\Big)\rightarrow0,\;{\rm in\;}\mathscr{B}(X),\;{\rm as\;}j\rightarrow\infty.
\end{equation}
Let\begin{equation}P_j=T_j^{-1}L_j^{-1}T_0, j=1,2,\cdots.\end{equation} Then $P_j: H_0\rightarrow H_j$ are affine isomorphisms and satisfy
\begin{equation}
\|P_j\|,\;\|P^{-1}\|\rightarrow 1,\;{\rm as\;}j\rightarrow\infty.
\end{equation}
Therefore,  it follows from (4.41), (4.42) and (4.43) that ${\rm vert}(H_0)$ is strongly (finitely) representable in $\bigcup_{j=1}^\infty{\rm vert}(H_j)$.
\end{proof}

Now, we are ready to prove the main result of this section.

\begin{theorem}
Suppose that $A$ is a  subset of a Banach space $X$. Then there exists a countable subset $A_0$ such that $A$ is strongly finitely representable in $A_0$.
\end{theorem}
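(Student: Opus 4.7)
The strategy is to assemble $A_0$ by harvesting, for each pair $(n,m)$ of nonnegative integers, a countable dense ``sample'' of the polyhedra in $\mathcal{H}_{n,m}(A)$, and then to use Lemma~4.8 to approximate an arbitrary polyhedron with vertices in $A$ by polyhedra with vertices in $A_0$.

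\textbf{Reduction to bounded $A$.} The compactness statement in Lemma~4.7 requires $A$ to be bounded, so I would first write $A=\bigcup_{k=1}^{\infty}A_k$, where $A_k=A\cap kB_X$ is bounded. If for each $k$ a countable $A_{k,0}\subset A_k$ is found such that $A_k$ is strongly finitely representable in $A_{k,0}$, then $A_0\equiv\bigcup_{k}A_{k,0}$ is countable, and for any $(n,m)$-polyhedron $H$ with ${\rm vert}(H)\subset A$, its finite vertex set sits inside some $A_k$; the strong finite representability of $A_k$ in $A_{k,0}\subset A_0$ then yields the required approximating polyhedron in $A_0$. Hence it suffices to treat the bounded case.

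\textbf{Building the countable sample.} Assume $A$ is bounded. For each triple $(n,m,i)$ with $n,m\in\mathbb{N}\cup\{0\}$ and $i\in\{0,1\}$, Lemma~4.7 tells us that $\mathcal{P}_{n,m,i}(A)\subset(\mathscr{B}(\ell^{n+i}_\infty),d_{\mathfrak H})\times C(K_{n+i})$ is relatively compact and hence (as a subset of a metric space) separable. Pick a countable dense subset $\mathcal{D}_{n,m,i}\subset\mathcal{P}_{n,m,i}(A)$. Each element of $\mathcal{D}_{n,m,i}$ arises as $({\rm vert}(T_H H),|\cdot|_H)$ for some polyhedron $H\in\mathcal{H}_{n,m,i}(A)$; choose one such $H$ for each element, and let $A_{n,m,i,0}$ be the (countable) union of the vertex sets of the chosen polyhedra. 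Set
\[
A_0=\bigcup_{n,m\geq 0,\ i\in\{0,1\}}A_{n,m,i,0},
\]
which is a countable subset of $A$.

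\textbf{Verifying strong finite representability.} Given an arbitrary $(n,m)$-polyhedron $H_0$ with ${\rm vert}(H_0)\subset A$, locate the corresponding $i\in\{0,1\}$ (according to whether $\dim X_{H_0}$ is $n$ or $n+1$) and form the point $U_0=({\rm vert}(T_{H_0}H_0),|\cdot|_{H_0})\in\mathcal{P}_{n,m,i}(A)$. By density, there is a sequence $U_j\in\mathcal{D}_{n,m,i}$ with $U_j\to U_0$; let $H_j$ be the polyhedron chosen for $U_j$, so ${\rm vert}(H_j)\subset A_0$. Lemma~4.8 applies directly and gives that ${\rm vert}(H_0)$ is strongly (finitely) representable in $\bigcup_{j\geq 1}{\rm vert}(H_j)\subset A_0$. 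Unpacking: for every $\eps>0$, Lemma~4.8 supplies an affine isomorphism $V$ from ${\rm aff}(H_0)$ onto ${\rm aff}(H_{j})$ (for $j$ large enough) that is an $\eps$-isometry and for which $d_{\mathfrak H}({\rm vert}(V H_0),{\rm vert}(H_{j}))<\eps$. This is precisely the defining property of $\eps$-strong finite representability for the polyhedron $H_0$ relative to $A_0$. Since $(n,m)$, $i$, and $H_0$ were arbitrary, $A$ is strongly finitely representable in $A_0$.

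\textbf{Where the work lies.} All the hard analytic content has already been absorbed into the earlier lemmas, so the present proof is essentially bookkeeping: the delicate point is only that the ``data'' of an $(n,m)$-polyhedron with vertices in $A$ is captured by the finite-dimensional pair (vertex set in $\ell^{n+i}_\infty$, norm on $\R^{n+i}$), both components of which lie in compact, hence separable, spaces. The main thing to check carefully is that choosing one representative polyhedron per element of $\mathcal{D}_{n,m,i}$ gives a countable collection whose vertex union is still countable, and that the convergence $U_j\to U_0$ supplied by density is exactly the hypothesis required to feed Lemma~4.8.
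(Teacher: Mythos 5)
Your proposal is correct and follows essentially the same route as the paper's own proof: reduce to bounded $A$ via $A_k=A\cap kB_X$, use the relative compactness of $\mathcal P_{n,m,0}(A)$ and $\mathcal P_{n,m,1}(A)$ from Lemma~4.7 to extract countable dense families, harvest the vertex sets of the corresponding polyhedra into $A_0$, and invoke Lemma~4.8 on a convergent sequence $U_j\to U_0$ to conclude. Your write-up is, if anything, slightly more explicit than the paper's about how density supplies the sequence fed into Lemma~4.8.
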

\begin{proof} Without loss of generality, we can assume that $A$ is bounded. Otherwise, let $A_k=A\cap(kB_X),\;k\in\N$. Then $A=\cup_kA_k$, and we consider the bounded sets $A_k$. If we show that for each $k$ there exists a countable subset $A_{k0}\subset A_k$ such that $A_k$ is strongly finitely representable in $A_{k0}$. Then we are done by taking $A_0=\cup_{k=1}^\infty A_{k0}$.

Assume again that $K_n$ is the closed unit ball of the space $\ell_\infty^n$,  $\Omega_n\subset C(K_n)$ is the set of all seminorms on $\R^n$ dominated by $\|\cdot\|_1$ (defined as Lemma 4.1); $\mathcal H_{n,m}(A)=\mathcal H_{n,m,0}(A)\cup\mathcal H_{n,m,1}(A)$ is defined as Lemma 4.6, i.e. the family of all $(n,m)$-polyhedrons $H$ with ${\rm vert}(H)\subset A$;  that $\mathcal T_{n,m,0}(A)$ and  $\mathcal T_{n,m,1}(A)$ are defined as Lemma 4.6;
   and that the products $\mathcal P_{n,m,0}(A)\equiv\mathcal T_{n,m,0}(A)\times\Omega_n$ and $\mathcal P_{n,m,1}(A)\equiv\mathcal T_{n,m,1}(A)\times\Omega_{n+1}$ are defined as Lemma 4.7.

  It suffices to show that for each fixed pair $n,m\in\N$, there exist two countable subsets $A_{n,m,0}$ and $A_{n,m,1}$ of $A$ such that every $(n,m)$-polyhedron $H$  in $\mathcal H_{n,m}(A)$ is strongly (finitely) representable in $A_{n,m}=A_{n,m,0}\cup A_{n,m,1}$.  It follows from Lemma 4.7 that for any fixed pair $n,m$ of nonnegative integers, both $\mathcal P_{n,m,0}(A)$ and $\mathcal P_{n,m,1}(A)$ are relatively compact. Consequently, there exist two sequences $(U_{n,m,0,j})_{j=1}^\infty$ and $(U_{n,m,1,j})_{j=1}^\infty$ with
$$U_{n,m,0,j}=({\rm vert}(T_{n,m,0,j}H_{0,j}),|\cdot|_{0,j})\in\mathcal P_{n,m,0}(A)$$ and $$U_{n,m,1,j}=({\rm vert}(T_{n,m,1,j}H_{1,j}),|\cdot|_{1,j})\in\mathcal P_{n,m,1}(A)$$
for some $(H_{0,j})_{j=1}^\infty, (H_{1,j})_{j=1}^\infty\subset \mathcal H_{n,m}(A)$, such that $(U_{n,m,0,j})_{j=1}^\infty$ is dense in $\mathcal P_{n,m,0}(A)$ and $(U_{n,m,1,j})_{j=1}^\infty$ is dense in $\mathcal P_{n,m,1}(A)$, where $T_{n,m,0,j}:{\rm span}(H_{0,j})\rightarrow(\R^n,|\cdot|_{0,j})$ and $T_{n,m,1,j}:{\rm span}(H_{1,j})\rightarrow(\R^{n+1},|\cdot|_{0,j})$ are linear isometries. It follows from Lemma 4.8 that each $(n,m)$-polyhedron $H\in\mathcal H_{n,m,0}(A)$ (resp. $\mathcal H_{n,m,0}(A)$)
is strongly finitely representable in the countable subset $A_{n,m,0}\equiv\bigcup_{j=1}^\infty{\rm vert}(H_{0,j})$ (resp. $A_{n,m,1}\equiv\bigcup_{j=1}^\infty{\rm vert}(H_{1,j})$.

Now, we finish the proof by taking $A_0=(\bigcup_{n,m=0}^\infty A_{n,m,0})\bigcup(\bigcup_{n,m=0}^\infty A_{n,m,1})$.

\end{proof}


\begin{corollary}
Suppose that $X$ and $Y$ are Banach spaces, and that $A\subset X$ and $B\subset Y$ are two subsets.

i) If $A$ is  finitely representable in $B$, then there is a countable subset $B_0$ of $B$ such that $A$ is finitely representable in $B_0$.

ii) If $A$ is strongly finitely representable in $B$, then there is a countable subset $B_0$ of $B$ such that $A$ is strongly finitely representable in $B_0$.
\end{corollary}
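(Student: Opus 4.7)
The plan is to reduce both parts of the corollary to Theorem~4.9 combined with a simple transitivity property of (strong) finite representability. First I would apply Theorem~4.9 to $B$ itself, obtaining a countable subset $B_0\subset B$ such that $B$ is strongly finitely representable in $B_0$; by Corollary~2.7 this also yields finite representability of $B$ in $B_0$. It then suffices to establish that if $A$ is (strongly) finitely representable in $C$ and $C$ is (strongly) finitely representable in $D$, then $A$ is (strongly) finitely representable in $D$. Taking $C=B$ and $D=B_0$ then gives both (i) and (ii) at once.

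For the transitivity of finite representability, fix $\eps>0$ and an $n$-simplex $S={\rm co}(x_0,\cdots,x_n)$ with vertices in $A$. Pick $\delta\in(0,1)$ with $(1+\delta)^2\leq 1+\eps$ and $(1-\delta)^2\geq 1-\eps$ (any $\delta<\eps/3$ works). By $\delta$-finite representability of $A$ in $C$, there are an $n$-simplex $S'\subset C$ and an affine isomorphism $T_1:{\rm aff}(S)\rightarrow{\rm aff}(S')$ sending vertices to vertices with distortion in $[1-\delta,1+\delta]$. Applying $\delta$-finite representability of $C$ in $D$ to the simplex $S'$ produces $S''\subset D$ and $T_2:{\rm aff}(S')\rightarrow{\rm aff}(S'')$ with analogous bounds. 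The composition $T_2\circ T_1:{\rm aff}(S)\rightarrow{\rm aff}(S'')$ sends the vertices of $S$ bijectively to those of $S''$ and has distortion in $[(1-\delta)^2,(1+\delta)^2]\subset[1-\eps,1+\eps]$, which proves (i).

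For (ii), the same strategy works with an $(n,m)$-polyhedron $H$ in place of $S$, invoking Lemma~2.6 at each step so that the intermediate $(n,m)$-polyhedrons $H'\subset C$ and $H''\subset D$ come equipped with affine isomorphisms $T_1,T_2$ satisfying the distortion estimates and the Hausdorff bounds
\[
d_\mathfrak{H}\bigl({\rm vert}(T_1(H)),{\rm vert}(H')\bigr)<\delta,\qquad d_\mathfrak{H}\bigl({\rm vert}(T_2(H')),{\rm vert}(H'')\bigr)<\delta,
\]
together with the matchings $T_1(S)=S'$ and $T_2(S')=S''$ for the $n$-simplex parts (so that ${\rm aff}(T_1(H))={\rm aff}(H')$, which is what allows $T_2\circ T_1$ to be formed). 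The distortion of $T_2\circ T_1$ is then bounded by $(1+\delta)^2$, and the $(1+\delta)$-Lipschitz property of $T_2$ combined with the triangle inequality for $d_\mathfrak{H}$ yields
\[
d_\mathfrak{H}\bigl({\rm vert}((T_2T_1)(H)),{\rm vert}(H'')\bigr)\leq (1+\delta)\,\delta+\delta=\delta(2+\delta),
\]
which is less than $\eps$ for $\delta<\eps/3$.

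The main obstacle is the simultaneous calibration of the tolerance $\delta$ so that both the multiplicative distortion $(1+\delta)^2$ and the additive Hausdorff error $\delta(2+\delta)$ fall below the target $\eps$. This calibration is elementary but rests crucially on Lemma~2.6: without the ability to arrange $T_1(S)=S'$ so that ${\rm aff}(T_1(H))={\rm aff}(H')$, one could not even form the composition $T_2\circ T_1$, let alone control the Hausdorff distance between ${\rm vert}((T_2T_1)(H))$ and ${\rm vert}(H'')$ via the Lipschitz bound on $T_2$.
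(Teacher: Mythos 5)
Your proof is correct and takes essentially the same route as the paper: apply Theorem~4.9 to $B$ itself to obtain the countable $B_0$, then conclude via transitivity of (strong) finite representability. The paper simply asserts that transitivity step in one line, whereas you supply the $\delta$-calibration and the Hausdorff-distance estimate for the composition $T_2\circ T_1$ explicitly; this fills in a detail the paper omits but is not a different argument.
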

\begin{proof}
Note that $B$ is always strongly finitely representable in itself. By Theorem 4.9, there is a countable subset $B_0$ of $B$ such that $B$ is strongly finitely representable in $B_0$. Thus, $A$  is  finitely (resp. strongly finitely) representable in $B_0$ if $A$ is  finitely (resp. strongly finitely) representable in $B$.
\end{proof}

\begin{corollary}
Suppose that $X$ and $Y$ are Banach spaces. If $X$ is finitely representable in $Y$, then there is a separable subspace $Y_0$ such that $X$ is finitely representable in $Y_0$. Consequently, every Banach space is finitely representable in a separable subspace of it.
\end{corollary}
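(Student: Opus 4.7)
The plan is to derive this statement as a direct consequence of Corollary 4.10(i), together with the equivalence (noted in the paragraph following Definition 2.1) that a Banach space $X$ is finitely representable in $Y$ if and only if the unit ball $B_X$ is finitely representable in $B_Y$ in the sense of Definition 2.1.

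Assuming $X$ is finitely representable in $Y$, I would first invoke this equivalence to conclude that $B_X$ is finitely representable in $B_Y$. Then, applying Corollary 4.10(i) with $A = B_X$ and $B = B_Y$, I obtain a countable subset $B_0 \subset B_Y$ such that $B_X$ is finitely representable in $B_0$. Next, set $Y_0 = \overline{\operatorname{span}}(B_0)$; since $B_0$ is countable, $Y_0$ is a separable closed subspace of $Y$. Because $B_0 \subset B_Y \cap Y_0 \subset B_{Y_0}$, and finite representability is trivially monotone in the target set (any simplex with vertices in $B_0$ also has vertices in $B_{Y_0}$, and the same witnessing data work), it follows that $B_X$ is finitely representable in $B_{Y_0}$. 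Invoking the equivalence in the reverse direction then yields that $X$ is finitely representable in $Y_0$, which proves the first assertion.

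For the final clause, I would simply take $Y = X$: every Banach space is trivially finitely representable in itself via identity maps on finite-dimensional subspaces, so the assertion just proved gives a separable subspace $Y_0 \subset X$ in which $X$ is finitely representable. I do not expect any substantial obstacle here, as the real work is encoded in Theorem 4.9 and Corollary 4.10; the only delicate point is maintaining correct bookkeeping between the classical subspace-based definition of finite representability of Banach spaces and the subset-based formulation of Definition 2.1, which is handled by the stated equivalence.
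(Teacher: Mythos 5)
Your proposal is correct and follows exactly the route the paper intends: the paper states this corollary without proof as an immediate consequence of Corollary 4.10(i) and the observation after Definition 2.1 that $X$ is finitely representable in $Y$ iff $B_X$ is finitely representable in $B_Y$, and your argument (pass to unit balls, extract the countable set $B_0$, take $Y_0=\overline{\mathrm{span}}(B_0)$, use monotonicity in the target set, then set $Y=X$ for the last clause) supplies precisely the missing details.
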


\section{Countable determination of the Kuratowski measure}

In this section, we will show the main result of the paper: the Kuratowski measure $\alpha$ defined on any metric space $M$ satisfies that for every bounded set $B\subset M$, there is a countable subset $B_0$ such that
$\alpha(B_0)=\alpha(B)$.
\begin{lemma}
Let $\alpha_M$ be the Kuratowski measure defined on a metric space $M$. Then there exist a Banach space $X$ and an isometric mapping $T: M\rightarrow X$
so that  $\alpha|_{T(M)}$ (the restriction of the Kuratowski measure $\alpha$ on $X$ restricted to $T(M)$) coincides with $\alpha_M$, i.e.
$$\alpha(T(B))=\alpha_M(B),\;{\rm for\; all}\; B\in\mathscr B(M).$$
\end{lemma}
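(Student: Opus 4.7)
The plan is to use the classical Kuratowski--Fr\'echet embedding of a metric space into a Banach space, and then verify that the Kuratowski measure of noncompactness is invariant under such isometric embeddings because diameters themselves are invariant.

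First, I would fix a base point $x_0\in M$ and take $X=\ell_\infty(M)$, defining $T:M\to X$ by $T(x)(y)=d(x,y)-d(x_0,y)$ for $x,y\in M$. A short check (triangle inequality) shows $T(x)\in\ell_\infty(M)$ and $\|T(x)-T(x')\|_\infty=d(x,x')$, so $T$ is an isometric embedding. (Equivalently, one may use $C_b(M)$ or the Arens--Eells space; the specific choice does not matter.)

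Next, I would verify $\alpha(T(B))=\alpha_M(B)$ for each $B\in\mathscr B(M)$ by proving the two inequalities separately. For $\alpha(T(B))\le \alpha_M(B)$, take any finite cover $B\subset\bigcup_{j\in F}E_j$ with $\mathrm{diam}(E_j)\le d$ in $M$. Then $T(B)\subset\bigcup_j T(E_j)$, and since $T$ is an isometry, $\mathrm{diam}(T(E_j))=\mathrm{diam}(E_j)\le d$ inside $X$; taking the infimum over $d$ yields the inequality. For the reverse inequality $\alpha_M(B)\le\alpha(T(B))$, take any finite cover $T(B)\subset\bigcup_{j\in F}E_j$ with $\mathrm{diam}(E_j)\le d$ in $X$, and set
\begin{equation*}
F_j\;=\;T^{-1}\!\bigl(E_j\cap T(M)\bigr)\subset M.
\end{equation*}
Given $b\in B$, $T(b)\in T(B)\cap E_j$ for some $j$, hence $T(b)\in E_j\cap T(M)$ and $b\in F_j$, so $B\subset\bigcup_j F_j$. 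Since $T|_{F_j}$ is an isometry onto $E_j\cap T(M)$, we have $\mathrm{diam}(F_j)=\mathrm{diam}(E_j\cap T(M))\le\mathrm{diam}(E_j)\le d$. Taking the infimum gives the bound.

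The main (and only) obstacle is the subtle point that in the second inequality the covering sets $E_j$ live in $X$ and need not be contained in $T(M)$; intersecting with $T(M)$ before pulling back resolves this cleanly without increasing diameters. Once both inequalities are established, the lemma follows, and the embedding $T$ together with the Banach space $X=\ell_\infty(M)$ furnishes the desired identification of $\alpha_M$ with the restriction of the Kuratowski measure on $X$ to $T(M)$.
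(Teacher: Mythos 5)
Your proposal is correct and follows essentially the same route as the paper: embed $M$ isometrically into a Banach space and observe that the Kuratowski measure is invariant under isometries. The paper simply cites the existence of such an embedding and states the invariance without detail, whereas you make the Kuratowski--Fr\'echet embedding explicit and carefully verify both inequalities (including the point about intersecting the covering sets with $T(M)$ before pulling back), which is a sound elaboration of the same argument.
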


\begin{proof}
It suffices to note that every metric space is isometric to a subset of a Banach space (see, for instance, \cite[Lemma 1.1]{ben}), and note that the Kuratowski measure of a subset of a metric space is invariant under isometric mappings.
\end{proof}

As we have already known that every metric space is isometric to a subset of a Banach space, we can blur the distinction between ``a metric space" and `` a subset of a Banach space" in this sequel.

Assume that $M$ is a metric space and $\mathcal U$ is an ultrafilter. For distinction, we use $\alpha_M$ to denote the Kuratowski measure on $M$, and $\alpha_\mathcal U$ to denote the  Kuratowski measure on the ultrapower $(M)_\mathcal U$ of $M$.

Given an ultrafilter $\mathcal U$ on an index set $I$, and a subset $A$ of a metric space $M$, since the canonical embedding $J: A\rightarrow (M)_\mathcal U$ is isometric, and since the Kuratowski measure is monotone non-decreasing in the order of set inclusion, we obtain
 $\alpha_M(A)=\alpha_{\mathcal U}(J(A))\leq\alpha_{\mathcal U}((A)_\mathcal U)$. On the other hand, for any $r>\alpha_M(A)$,  there exist a finite set $F$, and $A_j\subset M$ with ${\rm diam}(A_j)\leq r$ and with $A\subset\cup_{j\in F} A_j$. By Proposition 3.2,
 we have  $(A)_\mathcal U\subset(\cup_{j\in F} A_j)_\mathcal U=\cup_{j\in F}(A_j)_\mathcal U.$  This entails $\alpha_{\mathcal U}((A)_\mathcal U)\leq\max_{j\in F}\{\alpha_{\mathcal U}((A_j)_\mathcal U)\}\leq r.$
 Thus, the following result (belonging to Kaczor, Stachura, Walczuk and Zola \cite[Corollary. 2.3]{ka})  follows.




\begin{lemma}
For any subset  $B$ of a metric space $M$, and for any ultrafilter $\mathcal U$, we have  \begin{equation}\alpha_M(B)=\alpha_\mathcal U[(B)_{\mathcal{U}}].\end{equation}

\end{lemma}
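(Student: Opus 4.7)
The plan is to prove the equality by two inequalities, following the outline sketched just above the lemma. For the inequality $\alpha_M(B)\le \alpha_\mathcal U[(B)_\mathcal U]$, I would first invoke the canonical embedding $J:M\to (M)_\mathcal U$ sending each $x$ to the equivalence class of the constant family $(x)_{i\in I}$; this embedding is isometric, so $J(B)$ is an isometric copy of $B$ inside $(M)_\mathcal U$. Since $J(B)\subset (B)_\mathcal U$ and the Kuratowski measure is monotone non-decreasing under set inclusion and invariant under isometries, I get $\alpha_M(B)=\alpha_\mathcal U(J(B))\le \alpha_\mathcal U[(B)_\mathcal U]$.

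For the reverse inequality $\alpha_\mathcal U[(B)_\mathcal U]\le \alpha_M(B)$, I would fix an arbitrary $r>\alpha_M(B)$ and use the definition of the Kuratowski measure to cover $B$ by finitely many sets $A_j\subset M$ ($j\in F$, $F$ finite) with $\mathrm{diam}(A_j)\le r$ and $B\subset \bigcup_{j\in F} A_j$. Applying Proposition 3.2 (the ultraproduct commutes with finite unions), I obtain
\begin{equation}
(B)_\mathcal U\subset \Bigl(\bigcup_{j\in F} A_j\Bigr)_\mathcal U=\bigcup_{j\in F}(A_j)_\mathcal U.
\end{equation}
The key technical observation I would then verify is that $\mathrm{diam}\bigl((A_j)_\mathcal U\bigr)\le \mathrm{diam}(A_j)\le r$; this holds because for any two equivalence classes $(a_i)_\mathcal U,(b_i)_\mathcal U$ in $(A_j)_\mathcal U$ one has $\|(a_i)_\mathcal U-(b_i)_\mathcal U\|=\lim_\mathcal U\|a_i-b_i\|\le \mathrm{diam}(A_j)$. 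Consequently $(B)_\mathcal U$ has a finite cover by sets of diameter at most $r$, whence $\alpha_\mathcal U[(B)_\mathcal U]\le r$, and taking the infimum over $r>\alpha_M(B)$ gives the desired inequality.

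Combining the two inequalities completes the proof. There is no serious obstacle here: the only non-formal ingredient is the diameter estimate for the ultraproduct of a single set, which is standard and follows directly from the definition of the norm on the ultrapower, while the rest is a routine application of Proposition 3.2 together with the monotonicity of $\alpha$.
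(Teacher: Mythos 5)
Your proposal is correct and follows essentially the same route as the paper: the canonical isometric embedding $J$ together with monotonicity gives $\alpha_M(B)\le\alpha_\mathcal U[(B)_\mathcal U]$, and the reverse inequality comes from covering $B$ by finitely many sets of diameter at most $r$ and passing to ultraproducts via Proposition 3.2. Your explicit verification that $\mathrm{diam}\bigl((A_j)_\mathcal U\bigr)\le\mathrm{diam}(A_j)$ is the one step the paper leaves implicit, and it is exactly the right justification for the final bound $\le r$.
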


\bigskip

Now, we are ready to prove the main theorem of this paper.

\begin{theorem}
Suppose that $X$ is a metric space. Then for every bounded subset $B\subset X$, there is a countable subset $B_0$ of $B$ such that $\alpha(B_0)=\alpha(B)$.

\end{theorem}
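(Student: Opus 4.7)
The plan is to assemble the three main tools developed in the preceding sections: the isometric embedding of metric spaces into Banach spaces (Lemma 5.1), the strong finite representability in a countable subset (Theorem 4.9), and the ultrapower characterization of the Kuratowski measure (Lemma 5.2). Since Theorem 4.9 is stated in a Banach-space setting, the first step is to use Lemma 5.1 to realize $X$ as an isometric subset of some Banach space $Y$, thereby replacing $B\subset X$ by a bounded subset (still denoted $B$) of $Y$, with the Kuratowski measure unchanged under this identification.

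Next I would apply Theorem 4.9 to obtain a countable subset $B_0\subset B$ such that $B$ is strongly finitely representable in $B_0$. The crux of the argument is that this $B_0$ will be the desired set, so what remains is to show $\alpha(B)\le\alpha(B_0)$; the reverse inequality $\alpha(B_0)\le\alpha(B)$ is immediate from monotonicity since $B_0\subset B$. To invoke Lemma 3.3, however, the target of the strong finite representability must be convex. I therefore pass to $\mathrm{co}(B_0)$: since every $(n,m)$-polyhedron with vertices in $B_0$ trivially has vertices in $\mathrm{co}(B_0)\supset B_0$, the strong finite representability of $B$ in $B_0$ immediately upgrades to strong finite representability of $B$ in the convex set $\mathrm{co}(B_0)$.

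Now Lemma 3.3 applies and furnishes a free ultrafilter $\mathcal U$ together with an affine isometry
\[
T:\mathrm{aff}(B)\longrightarrow (\mathrm{aff}(\mathrm{co}(B_0)))_{\mathcal U},
\qquad T(B)\subset (\mathrm{co}(B_0))_{\mathcal U}.
\]
Since the Kuratowski measure is preserved under isometric maps and is monotone, we have $\alpha(B)=\alpha(T(B))\le\alpha((\mathrm{co}(B_0))_{\mathcal U})$. By Lemma 5.2 applied to the bounded set $\mathrm{co}(B_0)$ (which is bounded because $B$ is bounded), $\alpha((\mathrm{co}(B_0))_{\mathcal U})=\alpha(\mathrm{co}(B_0))$. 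Finally, property (4) of the regular measure $\alpha$ gives $\alpha(\mathrm{co}(B_0))=\alpha(B_0)$. Chaining these together yields $\alpha(B)\le\alpha(B_0)$, and combined with the trivial reverse inequality we conclude $\alpha(B)=\alpha(B_0)$.

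The main conceptual obstacle has already been handled by the earlier sections: the whole reason for introducing strong (rather than ordinary) finite representability is precisely that it provides the ultrapower embedding into $(\mathrm{co}(B_0))_{\mathcal U}$ rather than just into the larger $(\mathrm{aff}(\mathrm{co}(B_0)))_{\mathcal U}$, and this is what lets monotonicity of $\alpha$ bite. Given Lemma 3.3 and Theorem 4.9, the remaining task is only a short chain of inequalities, so no further obstacle arises; one only needs to remember to apply property (4) of regular measures to strip off the convex hull at the very end.
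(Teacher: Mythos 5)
Your proposal is correct and follows essentially the same route as the paper's own proof: embed into a Banach space via Lemma 5.1, extract the countable set $B_0$ via Theorem 4.9, pass to $\mathrm{co}(B_0)$ to invoke Lemma 3.3, then chain $\alpha(B)=\alpha(T(B))\le\alpha((\mathrm{co}(B_0))_{\mathcal U})=\alpha(\mathrm{co}(B_0))=\alpha(B_0)$ using Lemma 5.2 and property (4), finishing with monotonicity. No discrepancies worth noting.
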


\begin{proof}

By Lemma 5.1, we can assume that $X$ is a Banach space. Given $B\in\mathscr B(X)$, by Theorem 4.9, there exists a countable subset $B_0$ of $B$ such that $B$ is strongly finitely representable in $B_0$. Thus, $B$ is strongly finitely representable in $C\equiv{\rm co}(B_0)$. Note $\alpha(C)=\alpha(B_0).$
Applying Lemma 3.3, we can obtain a free ultrafilter $\mathcal U$, and an affine isometry $T: {\rm aff}(B)\rightarrow [{\rm aff}(B_0)]_\mathcal U$ such that $T(B)\subset (C)_\mathcal U$. It follows from Lemmas 5.1 and  5.2 that
\begin{eqnarray}
\alpha(B)=\alpha_\mathcal U[T(B)]\leq\alpha_\mathcal U[(C)_\mathcal U]=\alpha(C)=\alpha(B_0).
\end{eqnarray}
On the other hand, non-deceasing monotonicity of the Kuratowski measure $\alpha$ in the order of set inclusion entails that $\alpha(B)\geq\alpha(B_0)$.
Therefore, $\alpha(B_0)=\alpha(B)$.
\end{proof}

\begin{definition}
A measure of noncompactness $\mu$ on a metric space $M$ is said to be an ultra-isometric-invariant provided for every metric space $N$, every isometry $T: M\rightarrow N$  and every free ultrafilter $\mathcal U$, we have
\begin{equation}
\mu(T(B))=\mu(B)=\mu_\mathcal U((B)_\mathcal U),\;\;\forall\;B\in\mathscr B(M).
\end{equation}
\end{definition}

Clearly, the Kuratowski measure of noncompactness $\alpha$ and the Hausdorff measure of noncompactness $\beta$ on Hilbert spaces are ultra-isometric-invariant.

We can show the following result analogously.

\begin{theorem}
Suppose that $M$ is a metric space, and that $\mu$ is an ultra-isometric-invariant regular measure of noncompactness on $M$. Then for each $B\in\mathscr B(M)$, there is a countable $B_0\subset B$ such that $\mu(B_0)=\mu(B).$
\end{theorem}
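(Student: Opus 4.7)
The plan is to replay the proof of Theorem~5.3 almost verbatim, with the role of $\alpha$ played by the general ultra-isometric-invariant regular measure $\mu$. The point of Definition~5.4 is precisely to isolate the two external properties of $\alpha$ that were used in Theorem~5.3, namely isometric invariance (which let us pass from $M$ into a Banach space via Lemma~5.1) and ultra-invariance (Lemma~5.2), and these together with the regularity axioms (1)--(6) are enough to drive the argument.

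First I would use Lemma~5.1 together with the isometric invariance clause in Definition~5.4 to reduce to the case in which $M$ is a Banach space $X$: one isometrically embeds $M$ into $X$, transports $\mu$ along this embedding, and observes that by (5.3) the value of $\mu$ on any $B\in\mathscr B(M)$ equals that of the transported measure on the image. Next, given a bounded $B\subset X$, Theorem~4.9 yields a countable subset $B_0\subset B$ such that $B$ is strongly finitely representable in $B_0$. Put $C={\rm co}(B_0)$; since $B_0\subset C$, $B$ is also strongly finitely representable in the convex set $C$. Applying Lemma~3.3 then produces a free ultrafilter $\mathcal U$ and an affine isometry $T:{\rm aff}(B)\rightarrow [{\rm aff}(C)]_\mathcal U$ with $T(B)\subset (C)_\mathcal U$.

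Now the chain of inequalities used in Theorem~5.3 transports directly. The restriction $T|_B$ is an isometry from $B$ onto $T(B)$, so Definition~5.4 gives $\mu(T(B))=\mu(B)$; monotonicity (property (2)) and $T(B)\subset (C)_\mathcal U$ give $\mu_\mathcal U(T(B))\leq \mu_\mathcal U((C)_\mathcal U)$; the ultra-invariance clause of Definition~5.4 gives $\mu_\mathcal U((C)_\mathcal U)=\mu(C)$; finally, the regularity axiom (4) (convex hull invariance) gives $\mu(C)=\mu({\rm co}(B_0))=\mu(B_0)$. Stringing these together yields $\mu(B)\leq \mu(B_0)$. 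The reverse inequality $\mu(B_0)\leq \mu(B)$ is immediate from $B_0\subset B$ and property (2), so $\mu(B_0)=\mu(B)$.

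I do not expect any genuine obstacle: the analytic content has been absorbed into Theorem~4.9, Lemma~3.3 and property (4), so the only things to verify are bookkeeping items (the isometry transported from $T|_B$ really is an isometry of metric subspaces, and the notion of $\mu_\mathcal U$ on the ultrapower is the one singled out in Definition~5.4). The mildly delicate point is simply to make sure one is using the isometric invariance and the ultrapower invariance in exactly the form packaged in (5.3), so that the two-line computation $\mu(B)=\mu(T(B))\le \mu_\mathcal U((C)_\mathcal U)=\mu(C)=\mu(B_0)$ is legitimate.
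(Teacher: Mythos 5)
Your proposal is correct and is exactly what the paper intends: the paper's ``proof'' of this theorem consists of the single remark that it follows analogously to Theorem~5.3, and you have carried out that analogy faithfully, substituting the ultra-isometric-invariance clause of Definition~5.4 for Lemmas~5.1 and~5.2 and the regularity axiom~(4) for the step $\mu(\mathrm{co}(B_0))=\mu(B_0)$. No further comment is needed.
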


\;

\bibliographystyle{amsalpha}

\end{document}